\makeatletter
	\def\@fnsymbol#1{\ensuremath{\ifcase#1\or \dagger\or \ddagger\or\mathsection\or \mathparagraph\or \|\or **\or \dagger\dagger\or \ddagger\ddagger \else\@ctrerr\fi}}
\makeatother

\title{Strong solutions to McKean--Vlasov SDEs with coefficients of Nemytskii-type}
\author{Sebastian Grube\thanks{
                  Faculty of Mathematics, Bielefeld University, 33615 Bielefeld, Germany. E-Mail: sgrube@math.uni-bielefeld.de}
        }
\documentclass{article}
\usepackage[a4paper, left=3.55cm, right=3.55cm, top=2cm]{geometry}
\usepackage{setspace}
\setstretch{1.1}
\usepackage{graphicx}
\usepackage[utf8]{inputenc}
\usepackage{amsmath}
\usepackage{amsfonts}
\usepackage{amssymb}
\usepackage{amsthm}
\usepackage{hyperref}
\usepackage{mathrsfs}
\usepackage{bbm}
\usepackage{esint}
\usepackage{enumerate}
\usepackage{soul}

\newcommand{\law}[1]{{\mathcal L}_{#1}}
\newcommand{\LN}{\mathbb N}
\newcommand{\RR}{\mathbb R}

\newcommand{\BBB}{\mathbb B}
\newcommand{\BBBB}{\mathcal B}
\newcommand{\EEEE}{\mathcal E}
\newcommand{\FF}{\mathscr F}
\newcommand{\NN}{\mathcal{N}}
\newcommand{\PP}{\mathbb P}
\newcommand{\QQ}{ Q}
\newcommand{\PPPP}{\mathcal P}
\newcommand{\SSS}{\mathscr S}
\newcommand{\WW}{\mathbb{W}}

\newcommand{\inv}{^{-1}}
\newcommand{\rd}{{\RR^d}}
\newcommand{\scalarproduct}[3][]{\langle #2, #3\rangle_{#1}}
\newcommand{\divv}{\mathrm{div}}

\usepackage{nicefrac}

\usepackage{xifthen}

\newcommand{\norm}[2][]{\ifthenelse{\isempty{#1}}
	{\left\lVert#2\right\rVert}
	{\left\lVert#2\right\rVert_{{#1}}}
}


\newtheorem{theorem}{Theorem}[section]

\newtheorem{definition}[theorem]{Definition}
\newtheorem{remark}[theorem]{Remark}

\begin{document}
\newpage
\maketitle
\begin{abstract}
We study a large class of McKean--Vlasov SDEs with drift and diffusion coefficient depending on the density of the solution's time marginal laws in a Nemytskii-type of way.
A McKean--Vlasov SDE of this kind arises from the study of the associated nonlinear FPKE, for which is known that there exists a bounded Sobolev-regular Schwartz-distributional solution $u$.
Via the superposition principle, it is already known that there exists a weak solution to the McKean--Vlasov SDE  with time marginal densities $u$.
We show that there exists a strong solution the McKean--Vlasov SDE, which is unique among weak solutions with time marginal densities $u$.
The main tool is a restricted Yamada--Watanabe theorem for SDEs, which is obtained by an observation in the proof of the classic Yamada--Watanabe theorem.
\end{abstract}

\textbf{Mathematics Subject Classification (2020):} 60H10,
60G17,
35C99.
\\
\textbf{Keywords:} McKean--Vlasov stochastic differential equation, pathwise uniqueness, Yamada--Watanabe theorem, nonlinear Fokker--Planck--Kolmogorov equation

\section{Introduction}
\sloppy In this paper we will consider the following McKean--Vlasov stochastic differential equation (abbreviated by McKean--Vlasov SDE or MVSDE) in $\rd$, $d\in\LN$,  with coefficients of Nemytskii-type, which in our case is of the form
\begin{align}\label{MVSDE}
	dX(t) \notag
	=&\ E(X(t))b\left(\frac{d\law{X(t)}}{dx}(X(t))\right)dt  + \sqrt{2a\left(\frac{d\law{X(t)}}{dx}(X(t))\right)}\mathbbm{1}_{d\times d}\ dW(t), \notag \\
	X(0)=&\ \xi,\tag{MVSDE.PME}
\end{align}
where $t\in [0,T]$, $T\in (0,\infty)$, $\mathbbm{1}_{d\times d}$ is the $d$-dimensional unit matrix, $(W_t)_{t\in[0,T]}$ is a standard $d$-dimensional $(\FF_t)$-Brownian motion and $\xi$ an $\FF_0$-measurable function on some stochastic basis  $(\Omega,\FF,\PP;(\FF_t)_{t\in[0,T]})$, i.e. a complete, filtered probability space, where $(\FF_t)_{t\in[0,T]}$ is a normal filtration, and $\law{X(t)}:= \PP\circ (X(t))\inv$, $t\in [0,T]$.
Here, we assume that
\begin{align*}
	E: \rd \to \rd ,\
	b : \RR  \to \RR,\
	a: \RR \to \RR
\end{align*}
are functions with  $a(r):= \nicefrac{\beta(r)}{r}, r\in\RR\backslash\{0\}$, $a(0):= \beta'(0)$, such that the following assumptions hold:
\begin{enumerate}[(i)]
	\item \label{condition.beta.general} $\beta\in C^1(\RR)$, $\beta(0)=0$;
	\item \label{condition.beta.monotone} There exists $\gamma_0>0$ such that for all $r_1,r_2\in\RR$
	\begin{align*}
		\gamma_0 |r_1-r_2|^2 \leq (\beta(r_1)-\beta(r_2))(r_1-r_2);
	\end{align*}
	\item \label{condition.D} $E \in L^\infty(\rd;\rd)$, $\divv E \in L^2(\rd)+L^\infty(\rd)$, $(\divv E)^- \in L^\infty(\rd)$;
	\item \label{condition.b} $b\in C^1(\RR)\cap C_b(\RR)$, $b\geq 0$.\end{enumerate}
Here, we would like to point out that conditions \eqref{condition.beta.general} and \eqref{condition.beta.monotone} imply
\begin{align}\label{condition.a.nondegenerate} 
	a\geq\gamma_0> 0,
\end{align}
which, in turn, means that the diffusion matrix of \eqref{MVSDE} is assumed to be non-degenerate. 

There is a vast literature on the solvability of McKean--Vlasov SDEs under various assumptions on the coefficients. In 1966, McKean \cite{mckean1966class} initiated the study of diffusion processes related to certain non-linear PDEs arising from, for example, statistical mechanics (as in our case, see \cite{barbu2018Prob}). This work was closely followed up by important results such as \cite{funaki1984}, \cite{sznitman1984}, \cite{scheutzow1987} investigating the weak and/or strong solvability of McKean--Vlasov SDEs; for more references see \cite{barbu2019nonlinear}. For recent results consult, in particular, \cite{mishura2020existence}, \cite{zhang2021ddsde}, \cite{zhang2021ddsde},
	\cite{huang2019singular}, \cite{huang2020mckeanvlasov}, \cite{huang2020wellposedness}, and also \cite{delarue2018mckean} and the references therein.
	In all of these papers, the authors assume the continuity of the coefficients in the measure-component with respect to the weak topology, some Wasserstein distance, or total variation norm. In \cite{huang2020wellposedness}, the authors consider also the coefficients' continuity with respect to a norm, which is stronger than the sum of a Wasserstein distance and total variation norm.
	However, the coefficients in \eqref{MVSDE} do not bear any such continuity property in their measure-component. 

Equation \eqref{MVSDE} arises from the study of a nonlinear Fokker--Planck--Kolmogorov equation (in short: FPKE), which in this case is a porous medium equation perturbed by a nonlinear transport term of the following type
\begin{align}\label{PME}\tag{PME}
	\partial_t u + \divv(Eb(u)u)-\Delta\beta(u)=0\ \ \text{on } [0,T]\times\rd\ \text{ with }
	\left.u\right|_{t=0} = u_0.
\end{align}
This equation is to be understood in the Schwartz-distributional sense. We will say that a curve of
$L^1(\rd)$-functions $u=(u_t)_{t\in [0,T]}$
is a Schwartz-distributional solution to \eqref{PME} if $[0,T]\mapsto u_t(x)dx$ is narrowly continuous and
\begin{align}\label{FPKE.test}
\int_{\RR^d} \varphi(x) u_t(x)dx =& \int_{\RR^d} \varphi(x) u_0(x)dx + \int_0^t \int_{\RR^d} E^i(x)b(u_s(x)) \partial_i \varphi(x) u_s(x)dxds \nonumber\\
 &+ \int_0^t\int_\rd\beta(u_s(x))\Delta \varphi(x) dxds\ \ \forall t\in [0,T],
\end{align}
for each 
$\varphi \in C^{\infty}_c(\RR^d)$ (using Einstein summation convention), where $E(x)=(E^i(x))_{i=1}^d$. In the case that $u_t$ is even a probability density for all $t\in [0,T]$, then $u=(u_t)_{t\in [0,T]}$ is simply called a \textit{probability solution} to \eqref{PME}.

In \cite{barbu2018Prob} and \cite{barbu2019nonlinear}, an approach was developed in order to solve general McKean--Vlasov SDEs by first solving the associated nonlinear FPKE. This approach is based on the superposition principle developed by Trevisan \cite{trevisan_super}, which in turn relies on the fundamental result of Figalli \cite{figalli2008}; for a very recent generalisation of the latter two publications see \cite{bogachev2021super}. Note also the very recent superposition principle, which relates solutions to non-local FPKEs with solutions to SDEs with jumps \cite{roeckner2020levy}.
Of course, solving first the McKean--Vlasov SDE, It\^o's formula yields that the time marginal laws of the solution process solve the associated nonlinear FPKE. In this sense, solving the McKean--Vlasov equation is essentially equivalent to solving the associated nonlinear FPKE.

In the special case when the nonlinear FPKE is of the type \eqref{PME}, in \cite{barbu2020solutions}, Barbu and Röckner solved this equation under various assumptions on $\beta,E,b$ 
 and lifted the solution to a weak solution to \eqref{MVSDE}, whose time marginal laws coincide with the constructed probability solution $u$ to \eqref{PME}, provided $u_0$ is a bounded probability density.
The aim of this paper is to show that the constructed weak solution provided by \cite{barbu2020solutions} is a functional of the driving Brownian motion, i.e. a strong solution to \eqref{MVSDE} under our assumptions. 
Our method relies on a proper modification of the Yamada--Watanabe theorem for SDEs.
This modification makes it possible to prove the existence of a strong solution to an SDE provided weak existence and pathwise uniqueness holds in a certain subclass of weak solutions.
We will give details about this in Section \ref{section.SDE.yamada}, as there seems to be uncertainty about this result; in \cite{champagnat2018}, the authors stated, 'Note also that pathwise uniqueness is proved only for particular solutions [...], so we cannot use directly the result of Yamada and Watanabe to deduce strong existence [for the SDE under investigation].' (\cite[p. 1502]{champagnat2018}).
This modification of the Yamada--Watanabe theorem can be applied to \eqref{MVSDE} by fixing the solution $u$ to \eqref{PME} provided by \cite{barbu2020solutions} in the coefficients of \eqref{MVSDE}. This transfers the problem of strong existence for McKean--Vlasov SDEs to a problem for SDEs.

Moreover, \eqref{MVSDE} has already been studied in terms of weak existence and restricted pathwise uniqueness by Jabir and Bossy in \cite{bossy2019moderated} in the case $E\equiv b \equiv 0$ and under assumptions which strictly imply ours. They did not prove the existence of a strong solution in their case.\\
	
	This paper is structured as follows.\\
	First, we will introduce some frequently used notation in this paper.
	Afterward, in Section \ref{section.SDE.yamada}, we will present an abstract modification of the famous Yamada--Watanabe theorem for SDEs based on \cite[Appendix E]{spde}, which, in particular, enables us to conclude strong existence provided one has proved weak existence and pathwise uniqueness for some subclass of weak solutions. This theorem will be the main tool to deduce the existence of a strong solution to \eqref{MVSDE}.
	In Section \ref{section.MVSDE.PME.yamada}, we will apply the Yamada--Watanabe theorem for SDEs to general McKean--Vlasov SDEs by fixing the time marginal laws of a given curve of probability measures in the coefficients' measure component.
	Within the last section, Section \ref{section.MVSDE.PME.strong}, we will state the main result and its proof. This section is divided into three subsection.
	In Subsection \ref{section.MVSDE.PME.Procedure}, we will state the main result and the steps on how to prove it.
	In Subsection \ref{section.PME}, we will discuss the existence and regularity of a probability solution $u$ to \eqref{PME} under the conditions \eqref{condition.beta.general}-\eqref{condition.b}.
	In Subsection \ref{section.MVSDE.PME.weakExistence}, we will conclude the existence of a weak solution to \eqref{MVSDE} with time marginal law densities $u$.
	In Subsection \ref{section.MVSDE.PME.pathwiseUniqueness} we show that pathwise uniqueness holds among weak solutions to \eqref{MVSDE} with time marginal law densities $u$.
	This subsection is divided into two further subsections.
	In Subsection \ref{section.SDE.pathwiseUniqueness}, we will recall a pathwise uniqueness result for SDEs with bounded Sobolev-regular coefficients.
	In Subsection \ref{section.MVSDE.PME.pathwiseUniqueness} we will apply the pathwise uniqueness result for SDEs from Subsection \ref{section.SDE.pathwiseUniqueness} to \eqref{MVSDE}. Here we will add condition \eqref{condition.a} to the previous assumptions \eqref{condition.beta.general}-\eqref{condition.b}.
\section*{Notation}
Within this paper we will use the following notation.

For a topological space $(\textbf{T},\tau)$, $\BBBB(\textbf{T})$ shall denote the Borel $\sigma$-algebra on $(\textbf{T},\tau)$.\\
Let $n\geq 1$. On $\RR^n$, we will always consider the usual $n$-dimensional Lebesgue measure $\lambda^n$ if not said any differently. If there is no risk for confusion, we will just say that some property for elements in $\RR^n$ holds \textit{almost everywhere} (or $\textit{a.e.}$) if and only if it holds $\lambda^n$-almost everywhere.
Furthermore, on $\RR^n$, $|\cdot|_{\RR^n}$ denotes the usual Hilbert--Schmidt norm. If there is no risk for confusion, we will just write $|\cdot|=|\cdot|_{\RR^n}$. By $B_R(x)$ we will denote the usual open ball with center $x\in \RR^n$ and radius $R>0$. The rational numbers will be denoted by $\mathbb{Q}$.

Let $(M,d)$ be a metric space. Then $\PPPP(M)$ denotes the set of all Borel probability measures on $(M,d)$. We will consider $\PPPP(M)$ as a topological space with respect to the topology of weak convergence of probability measures. 
A curve of probability measures $(\nu_t)_{t\in [0,T]}\subset \PPPP(M)$ is called narrowly continuous if $[0,T]\ni t\mapsto \int \varphi(x)\nu_t(dx)$ is continuous for all $\varphi \in C_b(M)$. By $\PPPP_0(\RR^n)$ we will denote the set of all probability densities with respect to Lebesgue measure, i.e.
\begin{align*}
	\PPPP_0(\RR^n) = \left\{ \rho \in L^1(\RR^n)\ :\ \rho\geq 0 \text{ a.e.}, \int_{\RR^n} \rho(x) dx =1\right\}.
\end{align*}

Let $(\Omega,\FF,\PP)$ be a probability space and $(S,\SSS)$ a measurable space. If $X:\Omega \to S$ is an $\FF\slash\SSS$-measurable function, then we say that $\law{X}:=\PP\circ X\inv$ is the \textit{law} of $X$.

By $C_c^\infty(\RR^n)$ we denote the set of all infinitely differentiable functions with compact support. 
Let $(S,\SSS,\eta)$ be a measure space and $E$ be a Banach space.
The space of continuous functions on the interval $[0,T]$ with values in $E$ are denoted by $C([0,T];E)$.
For $t\in [0,T]$, $\pi_t: C([0,T];E) \to E$ denotes the canonical evaluation map at time $t$, i.e. $\pi_t(w):=w(t), w\in C([0,T];E)$.
Furthermore, for $1\leq q \leq \infty$, $L^q(S;E)$ symbolises the usual Bochner space on $S$ with values in $E$. 
If $S=\RR^n$ and $E=\RR$, we just write $L^p(\RR^n;\RR)=L^p(\RR^n)$.
Moreover, $W^{1,p}(\RR^n)$ denotes the usual Sobolev-space, containing all $L^p(\RR^n)$-functions, whose first-order distributional derivatives can be represented by elements in $L^p(\RR^n)$.
Moreover, $\divv,\Delta,\nabla$ symbolise the divergence, Laplacian and gradient with respect to the spatial variable and are taken in the Schwartz-distributional sense. Further, the transpose of the distributional Jacobian matrix is also denoted by the gradient symbol.
 \section{A modification of the Yamada--Watanabe theorem for SDEs}\label{section.SDE.yamada}
Since it will be the core of the technique of this paper, we will start by presenting a restricted version of the famous Yamada--Watanabe theorem for SDEs.

The well-known Yamada--Watanabe theorem for SDEs (see, e.g. \cite[Appendix E]{spde}; for the orginal work see \cite{yamada1971yamada}) provides a useful characterisation for the existence of a unique strong solution to a stochastic differential equation; therefore, loosely speaking, it is necessary and sufficient to have a weak solution for \textit{any} initial probability measure 
in combination with the pathwise uniqueness regarding \textit{all} weak solutions.

Carefully checking the statements and the proofs in \cite[Appendix E]{spde}, it is possible to refine their definitions and results to a \textit{restricted} Yamada--Watanabe theorem. This theorem is the result of a generalisation of the observation in \cite[Remark E.0.16]{spde}, which implies that, by the techniques employed in \cite[Appendix E]{spde}, a strong solution can be constructed from a weak solution with a fixed initial condition in the case that pathwise uniqueness is known for solutions with exactly this initial datum.

The restricted Yamada--Watanabe theorem will be of the following form.
Let us fix a set $P$ consisting of probability measures on the solution's path space, which have all the same initial time marginal laws.
Assume that pathwise uniqueness holds among all weak solutions to an SDE whose laws lie in $P$ and that there exists a weak solution $(X,W)$ with $\law{X} \in P$. Then, and only then, this is the case if there exists a unique strong solution to this SDE with law in $P$. For the precise statement see Theorem \ref{yamada.strongSol}.

This section is a modification of \cite[Appendix E]{spde} in which we slightly change the definitions, remarks, lemmata, theorems, and proofs to our setting. For reader’s convenience, we will stick to the finite dimensional setting. The adaption for the infinite dimensional case is essentially the same. The proof will be sketchy in the unclear and undetailed in the clear parts of the main result, since most of the technical parts remain unchanged compared to \cite[Appendix E]{spde}.
For the details consult \cite{grube2022thesis}, where also the infinite dimensional case is treated.

For the finite dimensional case of \cite[Appendix E]{spde}, we refer to \cite[Appendix E]{prevot2007spde}.
For the Yamada--Watanabe theorem in the mild solution framework, we refer to \cite{ondrejat2004uniqueness}. For the treatment of general stochastic models see \cite{kurtz2007yamada, kurtz2014yamada}. \\

Let $(\Omega, \FF,\PP;(\FF_t)_{t\geq 0})$ be a stochastic basis and $d_1 \in \LN$.
We will consider the Polish path spaces $(\BBB,\rho)$, $(\WW_0, \rho)$, where
\begin{align*}
	\BBB:= C([0,\infty);\rd), \ \ \WW_0 := \{w\in C([0,\infty);\RR^{d_1}): w(0)=0\},
\end{align*}
are respectively equipped with the metric
\begin{align*}
	\rho(w_1,w_2):=\sum_{k=1}^\infty 2^{-k} \left(\max_{0\leq t\leq k	}	|w_1(t)-w_2(t)|\wedge 1\right).
\end{align*}
The Borel $\sigma$-algebra of $\BBB$ and $\WW_0$ are denoted by $\BBBB(\BBB)$ and $\BBBB(\WW_0)$, respectively. Furthermore, for $t\in [0,\infty)$, we define $\BBBB_t(\BBB):=\sigma(\pi_s:0\leq s\leq t)$, where $\pi_s(w):= w(s), w\in \BBB$. $\BBBB_t(\WW_0)$ is defined analogously.\\

The equation under investigation is the following path-dependent stochastic differential equation
\begin{align}\label{SEE}\tag{SDE.pd}
	dX(t)=b(t,X)dt+\sigma(t,X)dW(t),\ \ t\in [0,\infty),
\end{align}
where  $b: [0,\infty) \times \BBB \to \rd$ and $\sigma: [0,\infty) \times \BBB \to \RR^{d\times d_1}$ are $\BBBB([0,\infty))\otimes \BBBB(\BBB) \slash \BBBB(\rd)$ and $\BBBB([0,\infty))\otimes \BBBB(\BBB) \slash \BBBB(\RR^{d\times d_1})$-measurable, respectively, such that for each $t \in [0,\infty)$
$b(t,\cdot) \text{ is } \BBBB_t(\BBB)\slash \BBBB(\rd)\text{-measurable}$,
and
	$\sigma(t,\cdot) \text{ is } \BBBB_t(\BBB)\slash \BBBB(\RR^{d\times d_1} )\text{-measurable.}$
Furthermore, $W$ is a standard $d_1$-dimensional $(\FF_t)$-Brownian motion and $\PP_W$ denotes the distribution of $W$ on $(\WW_0,\BBBB(\WW_0))$.\\

Let $\mu_0 \in \PPPP(\rd)$ and $P \subset \PPPP(\BBB)$. We will write $P=P_{\mu_0}$, if all the time marginal laws of the measures in $P$ start with the same measure $\mu_0$ at time $t=0$.
\begin{definition}[$P_{\mu_0}$-weak solution]\label{E.0.1}
	A pair $(X,W)$ is called a $P_{\mu_0}$-weak solution to \eqref{SEE}, if $X=(X(t))_{t\geq 0}$ is an $(\FF_t)$-adapted process with paths in $\BBB$, and $W$ is a standard $d_1$-dimensional $(\FF_t)$-Brownian motion on some stochastic basis $(\Omega,\FF,\PP;(\FF_t)_{t\geq 0})$ such that the following holds:
	\begin{enumerate}[(i)]
		\item $\PP\left(\int_0^T |b(s,X)| + |\sigma(s,X)|^2 ds<\infty\right)=1$, for every $T\geq 0$,
		\item The following equation holds
	    \begin{align}\label{SEE.integral}
			X(t) = X(0)+\int_0^t b(s,X) ds + \int_0^t \sigma(s,X)dW(s), \text{ for every } t \geq 0\ \PP\text{-a.s.},
		\end{align}
		\item $\PP\circ X\inv \in P_{\mu_0}$ (in particular, $\PP\circ X(0)\inv=\mu_0$).
	\end{enumerate}
\end{definition}
\begin{remark}\label{E.0.2} From the measurability assumptions on $b$ and $\sigma$, it follows that, if $X$ is as in Definition \ref{E.0.1}, then both processes $b(\cdot,X)$ and $\sigma(\cdot,X)$ are $(\FF_t)$-adapted.
\end{remark}
\begin{definition}[$P_{\mu_0}$-weak uniqueness]
We say that $P_{\mu_0}$-weak uniqueness holds for \eqref{SEE}, if any two $P_{\mu_0}$-weak solutions $(X,W)$, $(X',W')$ on stochastic bases $(\Omega,\FF,\PP;(\FF_t)_{t\geq 0})$ and $(\Omega',\FF',\PP';(\FF'_t)_{t\geq 0})$, respectively, have the same law on $\BBB$, i.e.
	$\PP\circ X\inv = \PP'\circ (X')\inv$.
\end{definition}
\begin{remark}\label{yamada.remark.weakUniq.element}
	Note that if $P_{\mu_0}$-weak uniqueness holds, it does not necessarily mean that $P_{\mu_0}$ consists of only one element, since not any element in $P_{\mu_0}$ needs to be the law of a $P_{\mu_0}$-weak solution. If, however,
	$$P_{\mu_0}=P_{\mu_0}\cap \{Q \in \PPPP(\BBB) : \exists (X,W) \text{\ weak solution to \eqref{SEE}}, \text{ such that } Q = \law{X}\},$$
then $P_{\mu_0}$ contains exactly one element.
\end{remark}
\begin{definition}[$P_{\mu_0}$-pathwise uniqueness]
We say that $P_{\mu_0}$-pathwise uniqueness holds for \eqref{SEE}, if for any two $P_{\mu_0}$-weak solutions $(X,W)$, $(Y,W)$ on a common stochastic basis $(\Omega,\FF,\PP;(\FF_t)_{t\geq 0})$ with a common standard $d_1$-dimensional $(\FF_t)$-Brownian motion $W$, 
$$\text{$X(0)=Y(0)$ $\PP$-a.s. implies $X(t)=Y(t)$ for all $t\geq 0\ \PP$-a.s.}$$
\end{definition}
Let $\tilde{\EEEE}_{\mu_0}$ to be the set of all maps $F_{\mu_0} : \rd\times \WW_0 \to \BBB$ which are $\overline{\BBBB(\rd)\otimes \BBBB(\WW_0)}^{{\mu_0}\otimes \PP_W}\slash\BBBB(\BBB)$-measurable, where $\overline{\BBBB(\rd)\otimes \BBBB(\WW_0)}^{{\mu_0}\otimes \PP_W}$ denotes the completion of $\BBBB(\rd)\otimes \BBBB(\WW_0)$ with respect to the measure ${\mu_0}\otimes \PP_W$.
\begin{definition}[$P_{\mu_0}$-strong solution]\label{yamada.strongSol}
The equation \eqref{SEE} has a $P_{\mu_0}$-strong solution if there exists $F_{\mu_0}\in \tilde{\EEEE}_{\mu_0}$ such that, for $\mu_0$-a.e. $x\in \rd$, $F_{\mu_0}(x,\cdot)$ is $\overline{\BBBB_t(\WW_0)}^{\PP_W}\slash \BBBB_t(\BBB)$-measurable for every $t\in[0,\infty)$, and for any standard $d_1$-dimensional $(\FF_t)$-Brownian motion $W$ on a stochastic basis $(\Omega,\FF,\PP;(\FF_t)_{t\geq 0})$ and any $\FF_0\slash \BBBB(\rd)$-measurable function $\xi: \Omega \to \rd$ with $\PP\circ \xi\inv=\mu_0$, one has that  $(F_{\mu_0}(\xi,W),W)$
	is a $P_{\mu_0}$-weak solution to \eqref{SEE} with $X(0)=\xi$ $\PP$-a.s.
	Here, $\overline{\BBBB_t(\WW_0)}^{\PP_W}$ denotes the completion with respect to $\PP_W$ in $\BBBB(\WW_0)$.
\end{definition}
\begin{definition}[unique $P_{\mu_0}$-strong solution]\label{yamada.uniqStrongSol}
	The equation \eqref{SEE} has a \textit{unique $P_{\mu_0}$-strong solution}, if there exists a function $F_{\mu_0}\in \tilde{\EEEE}_{\mu_0}$ satisfying the adaptedness condition in Definition \ref{yamada.strongSol} and if the following two conditions are satisfied.
	\begin{enumerate}[1.]
		\item For every standard $d_1$-dimensional $(\FF_t)$-Brownian motion $W$ on a stochastic basis $(\Omega,\FF,\PP;(\FF_t)_{t\geq 0})$ and any $\FF_0\slash \BBBB(\rd)$-measurable $\xi:\Omega \to \rd$ with $\law{\xi}=\mu_0$, $(F_{\mu_0}(\xi,W),W)$ is a $P_{\mu_0}$-weak solution.
		\item For any $P_{\mu_0}$-weak solution $(X,W)$ to \eqref{SEE} we have $X=F_{\mu_0}(X(0),W)\ \text{a.s.}$
		\end{enumerate}
\end{definition}
\begin{remark}\label{yamada.remark.weakUniq}
Let $(X,W)$ be a $P_{\mu_0}$-weak solution to \eqref{SEE} on a stochastic basis $(\Omega,\FF,\PP;(\FF_t)_{t\geq 0})$.
	Since $X(0)$ and $W$ are independent, we have
	\begin{align*}
		\PP\circ(X(0),W)\inv=\mu_0\otimes \PP_W.
	\end{align*}
	In particular, $P_{\mu_0}$-weak uniqueness holds for \eqref{SEE} provided there exists a unique $P_{\mu_0}$-strong solution to \eqref{SEE}.
\end{remark}

Let us state the main theorem of this section.
\begin{theorem}[restricted Yamada--Watanabe theorem]
	 \label{SDE.theorem.yamada}
	Let $P_{\mu_0}$, $b$ and $\sigma$ be as above. Then the following statements regarding \eqref{SEE} are equivalent:	\begin{enumerate}[(i)]
		\item \label{yamada.i} There exists a $P_{\mu_0}$-weak solution and $P_{\mu_0}$-pathwise uniqueness holds.
		\item \label{yamada.ii} There exists a unique $P_{\mu_0}$-strong solution.
	\end{enumerate}
\end{theorem}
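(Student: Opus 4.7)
The plan is to follow the classical Yamada--Watanabe strategy as in \cite[Appendix E]{spde}, keeping track throughout of the restriction $\law{X}\in P_{\mu_0}$. The direction (\ref{yamada.ii}) $\Rightarrow$ (\ref{yamada.i}) is the easy one. Given a unique $P_{\mu_0}$-strong solution with map $F_{\mu_0}$, existence of a $P_{\mu_0}$-weak solution is immediate: on any stochastic basis carrying an $(\FF_t)$-Brownian motion $W$ and an $\FF_0$-measurable $\xi$ independent of $W$ with $\law{\xi}=\mu_0$, condition 1 of Definition \ref{yamada.uniqStrongSol} makes $(F_{\mu_0}(\xi,W),W)$ a $P_{\mu_0}$-weak solution. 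For $P_{\mu_0}$-pathwise uniqueness, if $(X,W)$ and $(Y,W)$ are two $P_{\mu_0}$-weak solutions on a common basis with $X(0)=Y(0)$ a.s., condition 2 of Definition \ref{yamada.uniqStrongSol} yields $X=F_{\mu_0}(X(0),W)=F_{\mu_0}(Y(0),W)=Y$ a.s.

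For the harder direction (\ref{yamada.i}) $\Rightarrow$ (\ref{yamada.ii}), I would start from a given $P_{\mu_0}$-weak solution $(X,W)$ and consider the joint law of $(X(0),W,X)$ on $\rd\times \WW_0\times \BBB$. By Remark \ref{yamada.remark.weakUniq} the marginal on $\rd\times \WW_0$ is $\mu_0\otimes \PP_W$, so there exists a regular conditional kernel $Q:\rd\times \WW_0 \to \PPPP(\BBB)$ with
\begin{align*}
\mathrm{Law}(X(0),W,X)(dx,dw,d\omega)=\mu_0(dx)\,\PP_W(dw)\,Q(x,w;d\omega).
\end{align*}
The classical Yamada--Watanabe construction then enlarges the space to $\rd\times \WW_0\times \BBB\times \BBB$ equipped with $\mu_0(dx)\PP_W(dw)Q(x,w;d\omega_1)Q(x,w;d\omega_2)$, and the two canonical $\BBB$-valued projections, together with the canonical $\WW_0$-projection, yield two $P_{\mu_0}$-weak solutions $(X_1,W)$, $(X_2,W)$ on a common stochastic basis (after a suitable augmentation of the filtrations) sharing initial condition and driving noise. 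Applying $P_{\mu_0}$-pathwise uniqueness to this pair forces $X_1=X_2$ a.s., which precisely means that $Q(x,w;\cdot)$ is a Dirac mass for $(\mu_0\otimes \PP_W)$-a.e.\ $(x,w)$. Defining $F_{\mu_0}(x,w)$ as the support point, the $\overline{\BBBB(\rd)\otimes \BBBB(\WW_0)}^{\mu_0\otimes \PP_W}\slash\BBBB(\BBB)$-measurability of $F_{\mu_0}$ and its $\overline{\BBBB_t(\WW_0)}^{\PP_W}\slash \BBBB_t(\BBB)$-adaptedness in the $w$-variable follow by the same completion and monotone-class arguments as in \cite[Appendix E]{spde}, since these steps are insensitive to the restriction to $P_{\mu_0}$.

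The subtle step, and the one where the restricted setting differs most from the classical formulation, is verifying that $(F_{\mu_0}(\xi',W'),W')$ is a $P_{\mu_0}$-weak solution on an \emph{arbitrary} stochastic basis carrying an $(\FF_t')$-Brownian motion $W'$ and an $\FF_0'$-measurable $\xi'$ with $\law{\xi'}=\mu_0$. The point is that the law of $(\xi',W',F_{\mu_0}(\xi',W'))$ on $\rd\times \WW_0\times \BBB$ depends only on $\mu_0\otimes \PP_W$ and the measurable map $F_{\mu_0}$, so it coincides with the joint law $\mu_0(dx)\PP_W(dw)\delta_{F_{\mu_0}(x,w)}(d\omega)$ obtained from the original solution $(X(0),W,X)$. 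Equality of these joint laws transfers the integral equation \eqref{SEE.integral} from $(X,W)$ to $(F_{\mu_0}(\xi',W'),W')$ and simultaneously shows $\law{F_{\mu_0}(\xi',W')}=\law{X}\in P_{\mu_0}$. Uniqueness of $F_{\mu_0}$ in the sense of Definition \ref{yamada.uniqStrongSol} then follows by a final application of $P_{\mu_0}$-pathwise uniqueness on such a basis.

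The main obstacle is precisely this last verification: in the classical theorem one enjoys weak existence for \emph{every} initial law, so one can plug in arbitrary $\xi'$ without worry, whereas here one only knows weak existence within $P_{\mu_0}$. The restriction to a \emph{single} initial measure $\mu_0$ (rather than a family of admissible initial laws) is what salvages the argument, because it pins down the joint law $\mu_0\otimes\PP_W$ of $(\xi',W')$ and hence the law of $F_{\mu_0}(\xi',W')$, ensuring membership in $P_{\mu_0}$. In view of Remark \ref{yamada.remark.weakUniq.element}, there is in general no further element of $P_{\mu_0}$ to worry about once weak existence in $P_{\mu_0}$ is secured.
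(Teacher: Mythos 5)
Your proposal is correct and follows essentially the same route as the paper: disintegrate the joint law of $(X(0),W,X)$ via a conditional kernel, duplicate on the product space $\rd\times\WW_0\times\BBB\times\BBB$, invoke $P_{\mu_0}$-pathwise uniqueness to force the kernel to be a Dirac mass, extract $F_{\mu_0}$, and conclude the uniqueness assertion by a final application of pathwise uniqueness on an arbitrary basis. One small caveat: the adaptedness of $F_{\mu_0}(x,\cdot)$ for $\mu_0$-a.e.\ fixed $x$ required in Definition \ref{yamada.strongSol} is not quite ``the same'' as the classical joint $\overline{\BBBB(\rd)\otimes\BBBB_t(\WW_0)}^{\mu_0\otimes\PP_W}$-measurability from \cite[Appendix E]{spde}; the paper highlights exactly this point and deduces the slicewise statement from the joint one via a short Fubini-type argument (the ``Claim'' in the proof), so this is the one spot where the restricted formulation demands a little extra care rather than a verbatim transcription.
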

\begin{proof}[Sketch of proof of Theorem \ref{SDE.theorem.yamada}]
	Since the implication $\eqref{yamada.ii}\implies \eqref{yamada.i}$ is straight forward, let us focus on the other one:
	Let us fix two (not necessarily different) weak solutions $(X^{(i)},W^{(i)})$ on respective stochastic bases {$(\Omega^{(i)},\FF^{(i)},\PP^{(i)};(\FF^{(i)}_t)_{t\geq 0})$}, $i=1,2$.
	As shown in \cite[Lemma E.0.10]{spde}, there exist stochastic kernels $K^{(i)}_{\mu_0}, i=1,2$, from $(\rd\times\WW_0,\BBBB(\rd)\otimes\BBBB(\WW_0))$ to $(\BBB,\BBBB(\BBB))$ such that
	\begin{align*}
		(\PP^{(i)}\circ (X^{(i)}(0),X^{(i)},W^{(i)})\inv)(dx,dw_1,dw) = K^{(i)}_{\mu_0}((x,w),dw_1)\PP_W(dw)\mu_0(dx),\ i=1,2.
	\end{align*}
	
	Now, these two weak solutions can be transferred to a \textit{common} stochastic basis with the \textit{same} Brownian motion and initial value in the following way. Let $t\in [0,\infty)$ and
	\begin{align*}
		\tilde{\Omega} :=&\ \rd\times \BBB\times\BBB\times\WW_0,\\
		\tilde{\FF}_{\mu_0} :=&\ \overline{\BBBB(\rd)\otimes \BBBB(\BBB)\otimes\BBBB(\BBB)\otimes\BBBB(\WW_0)}^{\QQ_{\mu_0}},\\
		\tilde{\FF}^{\mu_0}_t :=&\ \bigcap_{\epsilon>0} \sigma \left(\BBBB(\rd)\otimes \BBBB_{t+\epsilon}(\BBB)\otimes\BBBB_{t+\epsilon}(\BBB)\otimes\BBBB_{t+\epsilon}(\WW_0),\NN_{\mu_0}\right),
	\end{align*}
	where
	$\NN_{\mu_0} := \{N\in\tilde{\FF}_{\mu_0} : \QQ_{\mu_0}(N)=0\}$.
	We define the following measure on $(\tilde{\Omega},\tilde{\FF}_{\mu_0})$:
	\begin{align*}
		\QQ_{\mu_0}(A) := \int_\rd\int_\BBB\int_\BBB\int_{\WW_0} \mathbbm{1}_{A}(z,w_1,w_2,w)\  K_{\mu_0}^{(1)}((z,w),dw_1)K_{\mu_0}^{(2)}((z,w),dw_2)\PP_W(dw)\mu_0(dz).
	\end{align*}
	Note that $(\tilde{\Omega},\tilde{\FF}_{\mu_0},\QQ_{\mu_0};(\tilde{\FF}^{\mu_0}_t)_{t\geq 0})$ is a stochastic basis.
	For $i=0,...,3$, let $\Pi_i$ denote the canonical projection from $\tilde{\Omega}$ onto its $i$-th coordinate.
	We observe that, likewise to \cite[Lemma E.0.11, Lemma E.0.12]{spde} and the techniques therein, $\Pi_3$ is a standard $d_1$-dimensional $(\tilde{\FF}_t^{\mu_0})$-Brownian motion and that $(\Pi_1,\Pi_3)$ and $(\Pi_2,\Pi_3)$ are $P_{\mu_0}$-weak solutions to \eqref{SEE} on  
$(\tilde{\Omega},\tilde{\FF}_{\mu_0},\QQ_{\mu_0};(\tilde{\FF}^{\mu_0}_t)_{t\geq 0})$ such that, for $i=1,2$,
\begin{align*}
	\law{\Pi_0}=\mu_0,\ \law{\Pi_i}=\law{X^{(i)}},\ \ \ and \ \ \ \Pi_1(0)=\Pi_2(0)=\Pi_0\ \ \QQ_{\mu_0}\text{-a.s.}
\end{align*}
By our pathwise uniqueness assumption in \eqref{yamada.i} we therefore deduce
\begin{align*}
	\Pi_1=\Pi_2\ \ \ \QQ_{\mu_0}\text{-a.s.}
\end{align*}
\sloppy In particular, $\law{X^{(1)}} = \law{X^{(2)}}$.
Now, with the same argument as in \cite[Lemma E.0.13]{spde}, there exists a function \mbox{$F_{\mu_0}:\rd\times\WW_0 \to \BBB$} such that
for $(\mu_0\otimes \PP_W)$-a.e. $(x,w) \in \rd\times \WW_0$ 
\begin{align*}
	K_{\mu_0}^{(1)} ((x,w),\cdot)=K_{\mu_0}^{(2)} ((x,w),\cdot)=\delta_{F_{\mu_0}(x,w)},
\end{align*}
(where for $y\in \BBB$, $\delta_y$ denotes the usual Dirac-measure on $(\BBB,\BBBB(\BBB))$)
and, moreover, $F_{\mu_0}$ is $\overline{\BBBB(\rd)\otimes\BBBB_t(\WW_0)}^{\mu_0\otimes \PP_W}\slash \BBBB_t(\BBB)$-measurable for all $t\geq 0$,
where $\overline{\BBBB(\rd)\otimes\BBBB_t(\WW_0)}^{\mu_0\otimes \PP_W}$ denotes the completion of  $\BBBB(\rd)\otimes\BBBB_t(\WW_0)$ with respect to $\mu_0\otimes \PP_W$ on $\BBBB(\rd)\otimes\BBBB(\WW_0)$. This kind of measurability is, in fact, inherited by certain measurability properties of the $K_{\mu_0}^{(i)}$ (for details consult \cite[Lemma E.0.10]{spde}).

We will now show the adaptedness condition for $F_{\mu_0}$ in Definition \ref{yamada.strongSol}, which slightly differs from the one in \cite[Definition E.0.5]{spde}. Therefore, let $\{C_i\}_{i=1}^\infty$ be a countable generator of $\BBBB(\rd)$, i.e. $C_i\subset \rd$, $i\in\LN$, such that $\BBBB(\rd)=\sigma(\{C_i:i\in \LN\})$.

\textbf{Claim:}  For $\mu_0$-a.e. $x\in \rd$
\begin{align}
\{F_{\mu_0} (x,\cdot)\in \pi_q\inv(C_i)\} \in \overline{\BBBB_q(\WW_0)}^{\PP_W}\ \ \ \forall q\in \mathbb{Q}, i \in \LN.
\end{align}
Note that $\BBBB_t(\WW_0) = \sigma(\{\pi_q\inv(C_i):q\in \mathbb{Q}\cap [0,t], i\in \LN\})$. Hence, the adaptedness condition of $F_{\mu_0}$ follows immediately.

\textbf{Proof of Claim:}
	Fix $q\in \mathbb{Q}$ and $i \in \LN$.
	By the measurability of $F_{\mu_0}$, there exist $B_{q,i}\in \BBBB(\rd)\otimes\BBBB_q(\WW_0)$, $\overline{N}_{q,i} \in \BBBB(\rd)\otimes\BBBB(\WW_0)$, and $\underline{N}_{q,i}\in \overline{\BBBB(\rd)\otimes \BBBB_q(\WW_0)}^{\mu_0\otimes\PP_W}$ such that $\underline{N}_{q,i}\subset\overline{N}_{q,i}$,
	\begin{align*}
		(\mu\otimes\PP_W)(\overline{N}_{q,i})=0 \text{\ \ \ and \ \ \ } \{F_{\mu_0} \in \pi_q\inv(C_i)\} = B_{q,i}\cup \underline{N}_{q,i}.
	\end{align*}
	Let $x\in \rd$ and $\text{e}_x: \WW_0 \ni w \mapsto (x,w) \in \rd\times\WW_0$. We define
	$B_x := \text{e}_x\inv(B)$, $\underline{N}^x_{q,i} := \text{e}_x\inv(\underline{N}_{q,i})$, and $\overline{N}^x_{q,i} := \text{e}_x\inv(\overline{N}_{q,i})$. Clearly, $\underline{N}^x_{q,i} \subset \overline{N}_{q,i}^x$. Obviously, $B^x_{q,i} \in \BBBB_q(\WW_0)$, $\overline{N}^x \in \BBBB(\WW_0)$ and
	\begin{align*}
		\{F_{\mu_0}(x,\cdot) \in \pi_q\inv(C_i)\} = B^x_{q,i} \cup \underline{N}^x_{q,i}.
	\end{align*} 
We define $\overline{N}:=\bigcup_{q\in \mathbb{Q},i \in \LN} \overline{N}_{q,i}$ and $\overline{N}^x := \text{e}_x\inv(\overline{N})$. Note that
	\begin{align*}
		0 = (\mu\otimes\PP_W)(\overline{N}) = \int_\rd \PP_W(\overline{N}_x)\ \mu_0(dx).
	\end{align*}
	It follows that, for $\mu_0$-a.e. $x\in \rd$, $\PP_W(\overline{N}^x)=0$. Hence, the claim is proved.\\
	
Now, the rest of the proof is straightforward.
One easily sees (cf. \cite[Lemma E.0.14]{spde}) that
\begin{align*}
	X^{(i)}=F_{\mu_0}(X^{(i)}(0),W^{(i)})\ \ \QQ_{\mu_0}\text{-a.s.},
\end{align*}
i.e. the solutions $X^{(1)},X^{(2)}$ are 'produced' by $F_{\mu_0}$.

Analogous to \cite[E.0.15]{spde}, we observe that $F_{\mu_0}$ 'produces' $P_{\mu_0}$-weak solutions \eqref{SEE} on any given stochastic basis with any given initial random variable distributed as $\mu_0$ and given Brownian motion. The proof of \cite[E.0.15]{spde} directly yields that the law of such a 'produced' solution coincides with the law of $X^{(i)}$ under $\PP^{(i)}$, $i=1,2$. By the pathwise uniqueness assumption in \eqref{yamada.ii}, any $P_{\mu_0}$-weak solution with respect to the same initial random variable and Brownian motion must coincide with the 'produced' solution.
This concludes the sketch of the proof.
\end{proof}
\begin{remark}\label{yamada.remark.finiteTime}
	Note that \cite[Appendix E]{spde} as well as this section can be stated analogously for \eqref{SEE} up to some finite time $T>0$, i.e. for
	\begin{align*}
		dX(t)=b(t,X)dt+\sigma(t,X)dW(t),\ \ t\in [0,T],
	\end{align*}
with coefficients  $b: [0,T] \times C([0,T];\rd) \to \rd$ and $\sigma: [0,T] \times C([0,T];\rd) \to \RR^{d\times d_1}$, which are $\BBBB([0,T])\otimes \BBBB(C([0,T];\rd)) \slash \BBBB(\rd)$ and $\BBBB([0,T])\otimes \BBBB(C([0,T];\rd)) \slash \BBBB(\RR^{d\times d_1})$-measurable, respectively.
In particular, the adaption of the definitions of a $P_{\mu_0}$-weak solution, (unique) $P_{\mu_0}$-strong solution, $P_{\mu_0}$-weak uniqueness, and $P_{\mu_0}$-pathwise uniqueness are straight forward.
\end{remark}
\section{Application of the restricted Yamada--Watanabe theorem to (general) McKean--Vlasov SDEs}\label{section.MVSDE.PME.yamada}
In this section, we will consider general McKean--Vlasov SDEs on $\rd$, which are of the form
\begin{align} \label{MVSDE.general}\tag{MVSDE}
	dX(t) =&\ F(t,X(t),\law{X(t)}) dt + \sigma(t,X(t),\law{X(t)}) dW(t),\ \ t\in[0,T],
\end{align}
where $T\in (0,\infty)$ and
	$F : [0,T] \times \RR^d \times \PPPP(\rd) \to \rd\text{, }
	\sigma : [0,T]  \times \RR^d \times \PPPP(\rd) \to \RR^{d\times d}$
are $\BBBB([0,T])\otimes \BBBB(\rd)\otimes \BBBB(\PPPP(\rd))/\BBBB(\rd)$- and $\BBBB([0,T])\otimes \BBBB(\rd)\otimes \BBBB(\PPPP(\rd))/\BBBB(\RR^{d\times d})$-measurable, respectively.
Let $(\mu_t)_{t\in [0,T]} \subset \PPPP(\rd)$ be a narrowly continuous curve of probability measures.
In the following, we will use the notation $F^\mu(t,x):=F(t,x,\mu_t)$, $(t,x) \in [0,T]\times\rd$. Note that
$F^\mu$ and $\sigma^\mu$ are $\BBBB([0,T])\otimes\BBBB(\rd)\slash \BBBB(\rd)$- and $\BBBB([0,T])\otimes\BBBB(\rd)\slash \BBBB(\RR^{d\times d})$-measurable, respectively.

In the following, let us fix some notation for weak solutions to \eqref{MVSDE.general} and pathwise uniqueness for \eqref{MVSDE.general} among weak solutions with given time marginal laws. 
Therefore, we set
\begin{align*}
		P_{(\mu_t)}:= \{Q \in \PPPP(C([0,T];\rd)): Q \circ \pi_t\inv = \mu_t\  \forall t\in [0,T]\}.
	\end{align*}

We have the following definitions.
\begin{definition}\label{MVSDE.weakSolution}
A tuple $(X,W)=(X(t),W(t))_{t\in [0,T]}$ consisting of two $(\FF_t)$-adapted $\RR^d$-valued stochastic processes
 on some given stochastic basis $(\Omega,\FF,\PP;(\FF_t)_{t\in [0,T]})$
is called a $P_{(\mu_t)}$-weak solution to \eqref{MVSDE.general} if
$W$ is a standard $d$-dimensional $(\FF_t)$-Brownian motion, and
\begin{enumerate}[(i)]
\item $\PP\left(\int_0^T |F(t,X(t),\law{X(t)})| + |\sigma(t,X(t),\law{X(t)})|^2 dt<\infty\right)=1,$
\item the following equality holds $\PP$-a.s.:
		 \begin{align*}
		 	X(t)=X(0) + \int_0^t F(s,X(s),\law{X(s)}) ds + \int_0^t \sigma(s,X(s),\law{X(s)}) dW(s)\ \forall t\in[0,T],
		 \end{align*}
\item $\PP\circ (X(t))\inv = \mu_t$, for all $t\in [0,T]$.
\end{enumerate}
\end{definition}
\begin{definition}\label{definition.restrPU}
	We say that $P_{(\mu_t)}$-pathwise uniqueness holds for \eqref{MVSDE.general}, if for any two $P_{(\mu_t)}$-weak solutions $(X,W)$, $(Y,W)$ on a common stochastic basis 
	$(\Omega,\FF,\PP;(\FF_t)_{t\in [0,T]})$ with a common standard $d$-dimensional $(\FF_t)$-Brownian motion $W$, 
	$$\text{$X(0)=Y(0)$ $\PP$-a.s. implies $X(t)=Y(t)$ for all $t\in [0,T]\ \PP$-a.s.}$$
\end{definition}

Let us note that $(X,W)$ is a $P_{(\mu_t)}$-weak solution to \eqref{MVSDE.general} if and only if it is a $P_{(\mu_t)}$-weak solution to the SDE(!)
\begin{align}\label{MVSDE.fixed.u}\tag{$\text{SDE}_\mu$}
	dX(t)&=F^\mu(t,X(t))dt + \sigma^\mu(t,X(t))dW(t),\ \ t\in [0,T],
\end{align}
and, obviously, $P_{(\mu_t)}$-pathwise uniqueness holds for \eqref{MVSDE.general} if and only if it holds for \eqref{MVSDE.fixed.u}.
Therefore, also the concepts of a (unique) $P_{(\mu_t)}$-strong solution is the same for \eqref{MVSDE.general} and \eqref{MVSDE.fixed.u}.

This leads to the following application of the restricted Yamada--Watanabe theorem (Theorem \ref{SDE.theorem.yamada}) to \eqref{MVSDE.general}.
\begin{theorem}
\label{MVSDE.restrYamada}
Let $(\mu_t)_{t\in [0,T]}\subset \PPPP_0(\rd)$ be a narrowly continuous curve of probability measures.
The following statements regarding \eqref{MVSDE.general} are equivalent. 
\begin{enumerate}[(i)]
	\item There exists a $P_{(\mu_t)}$-weak solution and $P_{(\mu_t)}$-pathwise uniqueness holds.
	\item There exists a unique $P_{(\mu_t)}$-strong solution.
\end{enumerate}
\end{theorem}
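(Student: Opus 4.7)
The plan is to derive this theorem as a direct application of the restricted Yamada--Watanabe theorem for SDEs (Theorem \ref{SDE.theorem.yamada}, in its finite-time version from Remark \ref{yamada.remark.finiteTime}), applied to the SDE \eqref{MVSDE.fixed.u} obtained by freezing the measure component of \eqref{MVSDE.general} along the fixed curve $(\mu_t)$.

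First I would identify $P_{(\mu_t)}$ as a legitimate choice for the set $P_{\mu_0}$ of Section \ref{section.SDE.yamada}, which is admissible since every $Q\in P_{(\mu_t)}$ satisfies $Q\circ\pi_0^{-1}=\mu_0$. Next I would verify that $F^\mu(t,x):=F(t,x,\mu_t)$ and $\sigma^\mu(t,x):=\sigma(t,x,\mu_t)$ are $\BBBB([0,T])\otimes\BBBB(\rd)$-measurable; this follows from the joint measurability of $F,\sigma$ together with Borel measurability of $t\mapsto\mu_t$ into $\PPPP(\rd)$, which is a consequence of the narrow continuity assumption. Regarded as path-dependent coefficients through $(t,w)\mapsto F^\mu(t,w(t))$, they inherit the $(\BBBB_t)$-adaptedness required in Section \ref{section.SDE.yamada} from the measurability of the evaluation map $\pi_t$, so the framework of that section applies to \eqref{MVSDE.fixed.u}.

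The key step is then the observation already recorded in the excerpt: a tuple $(X,W)$ is a $P_{(\mu_t)}$-weak solution to \eqref{MVSDE.general} if and only if it is a $P_{(\mu_t)}$-weak solution to \eqref{MVSDE.fixed.u}, because the constraint $\law{X(t)}=\mu_t$ for all $t\in[0,T]$ forces $F(t,X(t),\law{X(t)})=F^\mu(t,X(t))$ and likewise for $\sigma$. The same identification holds at the level of $P_{(\mu_t)}$-pathwise uniqueness and of (unique) $P_{(\mu_t)}$-strong solutions. Invoking Theorem \ref{SDE.theorem.yamada} for \eqref{MVSDE.fixed.u} with $P_{\mu_0}:=P_{(\mu_t)}$ then yields the desired equivalence. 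No serious technical obstacle arises; the entire content of the proof is that prescribing the time marginals of the candidate solution transforms the nonlinear McKean--Vlasov equation into a genuine SDE to which the restricted Yamada--Watanabe theorem applies verbatim, after which the translation between the two formulations is a direct check on the definitions.
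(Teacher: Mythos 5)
Your proposal is correct and follows exactly the route the paper takes: freeze the measure component along $(\mu_t)$ to obtain \eqref{MVSDE.fixed.u}, observe that the notions of $P_{(\mu_t)}$-weak solution, $P_{(\mu_t)}$-pathwise uniqueness and (unique) $P_{(\mu_t)}$-strong solution coincide for \eqref{MVSDE.general} and \eqref{MVSDE.fixed.u}, and then invoke Theorem \ref{SDE.theorem.yamada} (in its finite-time form, Remark \ref{yamada.remark.finiteTime}) with $P_{\mu_0}:=P_{(\mu_t)}$. The measurability checks you include are exactly those the paper records before stating the theorem.
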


Let us finally note that \eqref{MVSDE} can be considered as an equation in the general form \eqref{MVSDE.general} as the following remark conveys.
\begin{remark}\label{Remark_MVSDE.PME_is_MVSDE}
	The coefficients of \eqref{MVSDE} fulfill the measurability conditions of the coefficients of \eqref{MVSDE.general} in the following sense.
	Consider the time-homogenous coefficients 
	$F:\rd\times \PPPP(\rd) \to \rd$, $\sigma: \rd \times \PPPP(\rd) \to \RR^{d\times d}$ defined via
	\begin{align*}
			F(x,\nu):= E(x)b(v_a(x)),\ \sigma(x,\nu):= \sqrt{2a\left(v_a(x)\right)}\mathbbm{1}_{d\times d},
	\end{align*}
where $x \in\rd, \nu \in \PPPP(\rd)$ and $v_a$ is the following version of the $\lambda^d$-a.e. uniquely determined density of the absolutely continuous part of the probability measure $\nu$ given by the Besicovitch derivation theorem
\begin{align*}
	v_a(x) := \left\{
	\begin{array}{ll}
	\lim_{r\to \infty} \frac{\nu(B_{r}(x))}{\lambda^d(B_{r}(0))}, & x \in E_{\nu}, \\
	0, & x\in E_{\nu}^\complement, \\
	\end{array}
	\right. \text{\ \ \ \ for all $\nu \in \PPPP(\rd)$},
\end{align*}
where $E_\nu := \left\{ x \in \rd : \exists \lim_{r\to \infty} \frac{\nu(B_{r}(x))}{\lambda^d(B_{r}(0))} \in \RR \right\} \in \BBBB(\rd)$ and $\lambda^d(E_\nu^\complement)=0$ (cf. \cite[Theorem 2.22]{ambrosioBV}).
Hence, it is easy to see that $$\rd\times \PPPP(\rd) \ni (x,\nu) \mapsto v_a(x) \in [0,\infty)$$ is $\BBBB(\rd)\otimes\BBBB(\PPPP(\rd))\slash \BBBB(\RR)$ measurable. 
In the following, we will always consider this version of the absolutely continuous part of a probability measure.
\end{remark}

Now, we are well equipped to translate Theorem \ref{MVSDE.restrYamada} into action.
\section{Strong solvability of \eqref{MVSDE}}\label{section.MVSDE.PME.strong}
\subsection{The procedure and the main result}\label{section.MVSDE.PME.Procedure}
Our overall goal is to apply Theorem \ref{MVSDE.restrYamada}, which will enable us to show that there exists a strong solution to \eqref{MVSDE} (see Theorem \ref{MVSDE.restrYamada}). In order to achieve this, we will do the following steps.
\begin{enumerate}
	\item \label{procedure.1} We will use the recent result \cite{barbu2020solutions} (and the techniques of \cite{barbu2021evolution}) for \eqref{PME}, in order to guarantee the existence of a probability solution $u$ with sufficient Sobolev-regularity under the conditions \eqref{condition.beta.general}-\eqref{condition.b} (see Theorem \ref{PME.theorem.existence}).
	\item \label{procedure.2} We will apply the superposition principle procedure for McKean--Vlasov SDEs from \cite[Section 2]{barbu2019nonlinear} in combination with the result of Step \ref{procedure.1} in order to obtain a weak solution to \eqref{MVSDE} with time marginal law densities $u$ (see Theorem \ref{MVSDE.PME.theorem.weakExistence}).
	\item \label{procedure.3} Afterwards, we will prove pathwise uniqueness for \eqref{MVSDE}  among weak solutions with time marginal law densities $u$ via a pathwise uniqueness result for SDEs (see Theorem \ref{SDE.theorem.pathwiseuniqueness}) and Step \ref{procedure.1} in Theorem \ref{MVSDE.PME.theorem.pathwiseuniqueness}.
\end{enumerate}
For the ease of notation, we set $P_{(u_t)}:=P_{(\mu_t)}$, whenever $(\mu_t)_{t\in [0,T]}$ is a narrowly continuous curve of probability measures with $\mu_t = u_t(x)dx, u_t \in \PPPP_0(\rd)$, $t\in [0,T]$.

The steps will be carried out in the subsequent subsections. Combining the results of the steps with Theorem \ref{MVSDE.restrYamada} yield the main result of this section and paper.
\begin{theorem}[main result]\label{MVSDE.PME.theorem.strongExistence}
Let $d\neq 2$.
 Assume that conditions \eqref{condition.beta.general}-\eqref{condition.b} and \eqref{condition.a} (see below) are fulfilled and that $u_0 \in \PPPP_0(\rd)\cap L^\infty(\rd)$.
  Then, \eqref{MVSDE} has a unique $P_{(u_t)}$-strong solution,
  where
  $u$ is the constructed probability solution to \eqref{PME} provided by Theorem \ref{PME.theorem.existence}.
\end{theorem}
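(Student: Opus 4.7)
The plan is to verify the hypotheses of Theorem \ref{MVSDE.restrYamada} for the curve of measures $\mu_t:=u_t(x)\,dx$, $t\in[0,T]$, and then read off the existence of a unique $P_{(u_t)}$-strong solution. This decomposes into exactly the three enumerated steps in Subsection \ref{section.MVSDE.PME.Procedure}, followed by a clean application of the restricted Yamada--Watanabe theorem.

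First, I would invoke the existence result Theorem \ref{PME.theorem.existence} (based on \cite{barbu2020solutions} and the techniques of \cite{barbu2021evolution}): under \eqref{condition.beta.general}--\eqref{condition.b} and $u_0\in\PPPP_0(\rd)\cap L^\infty(\rd)$, it produces a narrowly continuous probability solution $u=(u_t)_{t\in[0,T]}$ to \eqref{PME} with $u_t\in\PPPP_0(\rd)\cap L^\infty(\rd)$ uniformly in $t$ and with enough Sobolev regularity (of $\beta(u)$, and hence of $u$ on the set $\{u>0\}$ via the strict monotonicity $\gamma_0|r_1-r_2|^2\leq(\beta(r_1)-\beta(r_2))(r_1-r_2)$) to serve as input for the pathwise uniqueness step. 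Next, I would lift this $u$ to a weak solution of \eqref{MVSDE} using the superposition principle procedure of \cite[Section 2]{barbu2019nonlinear}: the linear FPKE satisfied by $u$, with drift $E(x)b(u_t(x))$ and diffusion matrix $a(u_t(x))\,\mathbbm{1}_{d\times d}$, admits a Trevisan--Figalli superposition, giving a $P_{(u_t)}$-weak solution to \eqref{MVSDE.fixed.u} with $\mu_t=u_t\,dx$ that, by Remark \ref{Remark_MVSDE.PME_is_MVSDE}, is also a $P_{(u_t)}$-weak solution to \eqref{MVSDE}.

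The hard part is the third step, $P_{(u_t)}$-pathwise uniqueness. Let $(X,W),(Y,W)$ be two $P_{(u_t)}$-weak solutions to \eqref{MVSDE} on a common stochastic basis with $X(0)=Y(0)$ $\PP$-a.s. By the observation preceding Theorem \ref{MVSDE.restrYamada}, both pairs are weak solutions to the genuine (linear, non-measure-dependent) SDE \eqref{MVSDE.fixed.u} with coefficients
\begin{align*}
	F^u(t,x)=E(x)b(u_t(x)),\qquad \sigma^u(t,x)=\sqrt{2a(u_t(x))}\,\mathbbm{1}_{d\times d}.
\end{align*}
By \eqref{condition.D}, $E\in L^\infty(\rd;\rd)$ with $\divv E\in L^2(\rd)+L^\infty(\rd)$; since $b\in C^1\cap C_b$ and $a\in C^1$ with $a\geq\gamma_0>0$ (from \eqref{condition.a.nondegenerate}), the composed maps inherit boundedness and the Sobolev regularity of $u$ transfers to $F^u$ and $\sigma^u$ on the support of $u$, while $\sigma^u(\sigma^u)^\top\geq 2\gamma_0\,\mathbbm{1}$ gives uniform ellipticity. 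Under the still-unstated condition \eqref{condition.a} that Theorem \ref{MVSDE.PME.theorem.strongExistence} invokes, these properties should be exactly what is needed to apply the Krylov--R\"ockner/Zvonkin-type SDE pathwise uniqueness result formulated in Theorem \ref{SDE.theorem.pathwiseuniqueness}, yielding $X=Y$ $\PP$-a.s. on $[0,T]$. The dimensional restriction $d\neq 2$ will most plausibly enter here through the Sobolev exponents in that SDE pathwise uniqueness criterion when balancing the $L^2+L^\infty$-integrability of $\divv E$ with the regularity inherited from $u$.

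Having shown both $P_{(u_t)}$-weak existence and $P_{(u_t)}$-pathwise uniqueness, I would then apply Theorem \ref{MVSDE.restrYamada} directly to conclude the existence of a unique $P_{(u_t)}$-strong solution to \eqref{MVSDE}, which is the assertion of the theorem. The main obstacle in this scheme is the pathwise uniqueness argument: once the coefficients are frozen along $u$, the problem is purely an SDE question, but one must be careful that the regularity produced by Step 1 is precisely of the class admitted by Theorem \ref{SDE.theorem.pathwiseuniqueness}, and that the integrability of $\divv E$ in \eqref{condition.D} is compatible with it; everything else is routine bookkeeping already carried out in the previous subsections.
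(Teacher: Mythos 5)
Your proposal is correct and follows essentially the same route as the paper: the paper's proof of Theorem \ref{MVSDE.PME.theorem.strongExistence} is precisely the assembly of Theorem \ref{MVSDE.PME.theorem.weakExistence} ($P_{(u_t)}$-weak existence via the superposition principle), Theorem \ref{MVSDE.PME.theorem.pathwiseuniqueness} ($P_{(u_t)}$-pathwise uniqueness via freezing the coefficients along $u$ and applying Theorem \ref{SDE.theorem.pathwiseuniqueness}), and the restricted Yamada--Watanabe theorem, Theorem \ref{MVSDE.restrYamada}. Your only inaccuracy is a peripheral one: the restriction $d\neq 2$ enters through the existence and regularity result for \eqref{PME} (Theorem \ref{PME.theorem.existence}, inherited from \cite{barbu2020solutions}), not through the Sobolev exponents in the pathwise uniqueness criterion, and the regularity needed there is obtained globally from $u\in L^2([0,T];W^{1,2}(\rd))$ together with the local Lipschitz continuity of $\sqrt{a}$ guaranteed by \eqref{condition.a} and \eqref{condition.a.nondegenerate}, not merely on the support of $u$.
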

The proof of Theorem \ref{MVSDE.PME.theorem.strongExistence} will be postponed to the end of this section.
\subsection{Existence of a bounded Sobolev-regular probability solution $u$ to \eqref{PME}}\label{section.PME}
As described in the procedure in Section \ref{section.MVSDE.PME.Procedure}, the first step is to conclude the existence of a sufficiently regular solution to \eqref{PME} from \cite{barbu2020solutions} under the assumptions \eqref{condition.beta.general}-\eqref{condition.b}.
Combining \cite[Theorem 2.2]{barbu2020solutions} with the techniques of \cite{barbu2021evolution}, Barbu and Röckner showed that,
under more general assumptions on the coefficients than we require in \eqref{condition.beta.general}-\eqref{condition.b}, there exists a unique mild solution to \eqref{PME}, interpreted as a Cauchy problem driven by an $m$-accretive operator. This solution is also an integrable and bounded Schwartz-distributional solution to \eqref{PME} if $u_0$ is integrable and bounded as well.
If, in addition, $\beta$ is non-degenerate, i.e. $\beta' \geq \gamma_0>0$, this \textit{specific} solution can be proved to have certain desirable Sobolev-regularity under our conditions.
Our approach relies on exactly this regularity when proving the pathwise uniqueness result for \eqref{MVSDE} (cf. Theorem \ref{MVSDE.PME.theorem.pathwiseuniqueness}).

We have the following
\begin{theorem}[probability solution to \eqref{PME}]\label{PME.theorem.existence}
	Let $d\neq 2$
	and $u_0 \in\PPPP_0(\rd)\cap L^\infty(\rd)$. Under the assumptions \eqref{condition.beta.general}-\eqref{condition.b}, there exists a probability solution $u$ to \eqref{PME} such that
	\begin{align}\label{pu_u_reg_detail}
		u\in L^2([0,T];W^{1,2}(\rd))\cap L^\infty([0,T]\times\rd).
	\end{align}
\end{theorem}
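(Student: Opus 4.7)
The plan is to follow the Barbu--R\"ockner strategy and realise \eqref{PME} as the abstract Cauchy problem $\tfrac{d}{dt} u(t) + A u(t) \ni 0$, $u(0)=u_0$, in $L^1(\rd)$, where $A$ is the operator given formally by $Au = -\Delta\beta(u) + \divv(Eb(u)u)$ on an appropriate $L^1$-domain. A mild solution will be produced via the Crandall--Liggett generation theorem and then promoted to a Schwartz-distributional probability solution with the additional $L^\infty$ and $W^{1,2}$ regularity.

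The principal analytic hurdle -- and the likely main obstacle -- is the $m$-accretivity of $A$ on $L^1(\rd)$. The diffusive part $u\mapsto -\Delta\beta(u)$ is classically $m$-accretive on $L^1(\rd)$ under \eqref{condition.beta.general}--\eqref{condition.beta.monotone} (it is essentially the porous-medium operator). The transport perturbation $\divv(Eb(u)u)$ must then be absorbed by exploiting $E\in L^\infty$, $b\in C_b$, $\divv E\in L^2+L^\infty$, and crucially $(\divv E)^-\in L^\infty$. Following the scheme of \cite{barbu2021evolution}, I would solve the resolvent equation
\begin{align*}
    u - \lambda\Delta\beta(u) + \lambda\divv\bigl(E\,b(u)\,u\bigr) = f
\end{align*}
by regularising $\beta$ and $E$, truncating to bounded balls, applying monotone-operator methods in $H^{-1}$, and passing to the limit; the Kato inequality combined with $(\divv E)^-\in L^\infty$ then supplies the resolvent contraction in $L^1$, while the dimension restriction $d\ne 2$ enters through the Sobolev embedding used to close the transport estimate.

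Once $A$ is $m$-accretive, the semigroup $S(t)u_0:=u(t)$ inherits three properties that furnish the structural content of the theorem: $T$-accretivity gives positivity preservation, so $u(t)\ge 0$; mass conservation $\|u(t)\|_{L^1}=1$ follows by inserting a cutoff $\varphi_R(x)=\varphi(x/R)$ into \eqref{FPKE.test} and using that $|\nabla\varphi_R|,|\Delta\varphi_R|$ decay in $R$ together with the $L^\infty$-bounds on $Eb(\cdot)$ and $\beta(\cdot)$; and testing the resolvent equation formally against the sign of $(u-M)^+$ with $M=\|u_0\|_\infty$, using $b\ge 0$ and $(\divv E)^-\in L^\infty$, yields $\|u(t)\|_\infty \le e^{CT}\|u_0\|_\infty$. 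Standard distributional calculations on the approximating smooth solutions confirm that $u$ satisfies \eqref{FPKE.test}; thus $u$ is a probability solution lying in $L^\infty([0,T]\times\rd)$.

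For the remaining Sobolev regularity I would exploit the non-degeneracy \eqref{condition.a.nondegenerate}, i.e.\ $\beta'\ge\gamma_0>0$. Testing the regularised equation against $u$ itself and integrating by parts produces the energy identity
\begin{align*}
    \tfrac12\|u(T)\|_{L^2}^2 + \int_0^T\!\!\int_\rd \beta'(u)|\nabla u|^2\,dx\,ds = \tfrac12\|u_0\|_{L^2}^2 + \int_0^T\!\!\int_\rd u\,b(u)\, E\cdot\nabla u\,dx\,ds,
\end{align*}
in which the right-hand transport term is absorbed by Young's inequality using $Eb(\cdot)\in L^\infty$ and $u\in L^1\cap L^\infty\subset L^2$ uniformly in $t$, while the dissipation term dominates $\gamma_0\int_0^T\!\!\int_\rd|\nabla u|^2\,dx\,ds$. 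This yields $\nabla u\in L^2([0,T]\times\rd)$; combined with the already-established $L^\infty$-bound this is precisely \eqref{pu_u_reg_detail}, completing the proof.
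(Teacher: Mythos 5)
Your proposal is correct in outline and follows essentially the same route as the paper: the paper's proof simply cites \cite{barbu2020solutions} (Theorem 2.2) for the bounded probability solution via the $m$-accretive realisation of the operator in $L^1(\rd)$ (with the $C^2$-assumption on $\beta$ relaxed to $C^1$ as in \cite{barbu2021evolution}), and the technique of \cite{barbu2021evolution} for the $L^2([0,T];W^{1,2}(\rd))$-regularity, which is exactly the nondegenerate energy estimate you write down. You have correctly identified the $m$-accretivity of the perturbed porous-medium operator as the substantial step being outsourced to those references; your sketch of it, and of the positivity, mass-conservation, $L^\infty$- and gradient estimates, matches the cited arguments.
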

\begin{proof}
By \cite[Theorem 2.2]{barbu2020solutions}, we know that there exists a Schwartz-distributional solution $u$ to \eqref{PME} with ${u \in C([0,T];L^1(\rd))\cap L^\infty([0,T]\times\rd)}$, and with the property that 
$u_0 \in \PPPP_0(\rd)$ implies  $u_t\in \PPPP_0(\rd)$, for all $t\in [0,T]$.
Here, we should note that the authors require $\beta\in C^2(\rd)$.
However, due to \eqref{condition.beta.monotone} and $\divv D \in L^2(\rd)+L^\infty(\rd)$, this condition can be relaxed to $\beta \in C^1(\RR)$; this works analogous to \cite[p. 20, proof of (2.6)]{barbu2021evolution}.
Further, using the technique in \cite{barbu2021evolution}, our assumptions imply that $u \in L^2([0,T];W^{1,2}(\rd))$. For an elaborate proof of these facts, see \cite{grube2022thesis}.
\end{proof}
\subsection{The existence of a $P_{(\mu_t)}$-weak solution to \eqref{MVSDE}}\label{section.MVSDE.PME.weakExistence}
The second step of the procedure in Section \ref{section.MVSDE.PME.Procedure}
is to show the existence of a weak solution to \eqref{MVSDE}.

The following theorem is a variant of \cite[Theorem 6.1 (a)]{barbu2020solutions} and is based on a superposition principle procedure for McKean--Vlasov SDEs as described in \cite[Section 2]{barbu2019nonlinear}, which generalises the procedure in \cite[Section 2]{barbu2018Prob}).
\begin{theorem}[$P_{(\mu_t)}$-weak solution]\label{MVSDE.PME.theorem.weakExistence}Let $d\neq 2$
and $u_0 \in\PPPP_0(\rd)\cap L^\infty(\rd)$. Assume that conditions \eqref{condition.beta.general}-\eqref{condition.b} are fulfilled.
	Then, there exists a $P_{(u_t)}$-weak solution (X,W) to \eqref{MVSDE},
	 where
	$u$ is the probability solution provided by Theorem \ref{PME.theorem.existence}.
\end{theorem}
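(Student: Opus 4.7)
\textbf{Proof plan for Theorem \ref{MVSDE.PME.theorem.weakExistence}.}
The strategy, following \cite{barbu2019nonlinear}, is to use the Trevisan/Figalli superposition principle on a \emph{linearised} version of \eqref{PME} obtained by freezing $u$ in the coefficients. Starting from the solution $u$ provided by Theorem \ref{PME.theorem.existence}, I rewrite \eqref{PME} as
\begin{equation*}
\partial_t u_t = \tfrac{1}{2}\,\partial_i\partial_j\!\left( (\tilde\sigma\tilde\sigma^\top)_{ij}(t,\cdot)\, u_t\right) - \partial_i\!\left( \tilde F^i(t,\cdot)\,u_t\right),
\end{equation*}
where
\begin{equation*}
\tilde F(t,x) := E(x)\,b(u_t(x)), \qquad \tilde\sigma(t,x) := \sqrt{2 a(u_t(x))}\,\mathbbm{1}_{d\times d}.
\end{equation*}
The identity $\Delta\beta(u) = \partial_i\partial_j(a(u)\,\delta_{ij}\,u)$ holds because $\beta(r)=a(r)\,r$, so the two formulations of \eqref{PME} coincide in the Schwartz-distributional sense once one tests with $\varphi\in C_c^\infty(\rd)$. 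Since $u$ is narrowly continuous in $t$ as a curve of probability measures and $u_t\in\PPPP_0(\rd)$ for every $t\in[0,T]$, the curve $(u_t\,dx)_{t\in[0,T]}$ is a probability-measure-valued solution to this linear FPKE.

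Next, I verify the local integrability condition required to invoke the superposition principle in its version applicable to linear FPKEs with non-regular coefficients (Trevisan \cite{trevisan_super}, Figalli \cite{figalli2008}; see also \cite{bogachev2021super}). It suffices to show
\begin{equation*}
\int_0^T\!\int_\rd \bigl(|\tilde F(t,x)| + |\tilde\sigma(t,x)|^2\bigr)\,u_t(x)\,dx\,dt < \infty.
\end{equation*}
For the drift, $|\tilde F(t,x)|\le \|E\|_\infty\,\|b\|_\infty$ by \eqref{condition.D} and \eqref{condition.b}, so the integrand is bounded by a constant times $u_t$, and integrating against the probability density $u_t$ gives a finite number depending only on $T$, $\|E\|_\infty$, and $\|b\|_\infty$. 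For the diffusion, $|\tilde\sigma(t,x)|^2 = 2d\cdot a(u_t(x))$; since $u\in L^\infty([0,T]\times\rd)$ by \eqref{pu_u_reg_detail} and $\beta\in C^1(\RR)$ with $\beta(0)=0$, $a=\beta(\cdot)/(\cdot)$ extends continuously to $a(0)=\beta'(0)$, hence $a$ is bounded on the range of $u$. Again $\int_0^T\!\int |\tilde\sigma|^2 u_t\,dx\,dt <\infty$. Thus the superposition principle applies and yields a probability measure $Q$ on $C([0,T];\rd)$ under which the canonical process $X$ is a weak solution (on a suitable stochastic basis, possibly after enlarging) to
\begin{equation*}
dX(t) = \tilde F(t,X(t))\,dt + \tilde\sigma(t,X(t))\,dW(t), \qquad \law{X(t)} = u_t\,dx \text{ for all } t\in[0,T].
\end{equation*}

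Finally, I translate this back into a $P_{(u_t)}$-weak solution of \eqref{MVSDE}. Because $\law{X(t)}=u_t\,dx$ for every $t$, the Besicovitch-style density $v_a$ from Remark \ref{Remark_MVSDE.PME_is_MVSDE} associated with the measure $\law{X(t)}$ coincides $\lambda^d$-a.e.\ with $u_t$. Hence
\begin{equation*}
\tilde F(t,X(t)) = E(X(t))\,b\!\left(\tfrac{d\law{X(t)}}{dx}(X(t))\right), \quad \tilde\sigma(t,X(t))=\sqrt{2a\!\left(\tfrac{d\law{X(t)}}{dx}(X(t))\right)}\mathbbm{1}_{d\times d}
\end{equation*}
$\PP$-almost surely for almost every $t$, which is precisely the form of the coefficients in \eqref{MVSDE}. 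The three defining properties of Definition \ref{MVSDE.weakSolution} then hold, and $\law{X}\in P_{(u_t)}$ by construction, finishing the proof.

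The main obstacle I anticipate is not conceptual but technical: checking that the particular density version $v_a$ singled out in Remark \ref{Remark_MVSDE.PME_is_MVSDE} identifies a.e.\ with $u_t$ for every $t\in[0,T]$, so that the Nemytskii-type coefficients of \eqref{MVSDE} agree with the frozen coefficients $\tilde F,\tilde\sigma$ along the solution path (up to a $dt\otimes \PP$-null set). This is resolved because, for each $t$, $\law{X(t)}$ is absolutely continuous with $L^\infty$-density $u_t$, so the Besicovitch derivative equals $u_t$ outside a Lebesgue-null set, and the argument of $b,a$ in the coefficients of \eqref{MVSDE} sees only the $\lambda^d$-a.e.\ equivalence class.
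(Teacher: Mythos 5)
Your proposal is correct and follows essentially the same route as the paper: freeze $u$ in the coefficients, observe that $u$ then solves the associated linear FPKE (via $\beta(r)=a(r)r$), check the integrability condition $\int_0^T\int_\rd(|Eb(u)|+|a(u)|)\,u\,dx\,dt<\infty$ using the boundedness of $E$, $b$, $u$ and the continuity of $a$, and invoke the superposition principle procedure of \cite[Section 2]{barbu2019nonlinear}. The paper cites that procedure as a black box where you unpack it (including the harmless identification of the Besicovitch density version with $u_t$ up to $\lambda^d$-null sets), but the substance is identical.
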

\begin{proof}
It is clear that conditions \eqref{condition.beta.general} and \eqref{condition.b} imply that $b$ and $a$ are continuous. By Theorem \ref{PME.theorem.existence}, we have that, in particular, $u\in L^\infty([0,T]\times\rd)$. Hence, $a(u)\in L^\infty([0,T]\times\rd)$, and, using \eqref{condition.D}, $E b(u) \in L^\infty([0,T]\times\rd;\rd)$. This yields
	\begin{align*}
		\int_0^T \int_\rd \left[|a(u(t,x))|+|E(x)b(u(t,x))|\right] u(t,x)dxdt<\infty.
	\end{align*}
	This enables us to use the superposition principle procedure for McKean--Vlasov SDEs in \cite[Section 2]{barbu2019nonlinear} which provides us with a $P_{(u_t)}$-weak solution $(X,W)$ to \eqref{MVSDE}.
	This finishes the proof.
\end{proof}
\subsection{$P_{(\mu_t)}$-pathwise uniqueness for \eqref{MVSDE}}\label{section.MVSDE.PME.pathwiseUniqueness}
The third step of the procedure in Section \ref{section.MVSDE.PME.Procedure} is to show $P_{(u_t)}$-pathwise uniqueness for \eqref{MVSDE}, where $u$ is the probability solution to \eqref{PME} provided by Theorem \ref{PME.theorem.existence}.
As explained in the beginning of Section \ref{section.MVSDE.PME.strong}, showing $P_{(u_t)}$-pathwise uniqueness for \eqref{MVSDE} is the same as showing $P_{(u_t)}$-pathwise uniqueness for \eqref{MVSDE.fixed.u}, where $\mu = (u_tdx)_{t\in [0,T]}$.
Since the coefficients $Eb(u)$ and $\sqrt{2a(u)}$ are not continuous in the spacial variable, we will recall a pathwise uniqueness result for SDEs with time-dependent Sobolev-coefficients from \cite{roeckner2010weakuniqueness} in Subsection \ref{section.SDE.pathwiseUniqueness}.
In  Subsection \ref{section.PU.app}, we will then apply this result to show $P_{(\mu_t)}$-pathwise uniqueness for \eqref{MVSDE}.
\subsubsection{A pathwise uniqueness result for SDEs with bounded Sobolev-regular coefficients}\label{section.SDE.pathwiseUniqueness}

There are a lot of pathwise uniqueness results for SDEs across the literature, but numerous classical as well as recent results on this topic require the diffusion coefficient to be continuous in the spacial variable. 
For example, in \cite{zhang2011singular} and \cite{ling2019sdes} (see also the classic result \cite{Veretennikov_1981}), the authors consider a (singular) drift coefficient in $L^q([0,T];L^p(\rd))$, $\nicefrac{d}{p}+\nicefrac{2}{q}<1$, where $p,q \in (2,\infty)$, and an elliptic diffusion coefficient, whose distributional derivative is in $L^q([0,T];L^p(\rd))$, for the same choice of $p$ and $q$ as before. Additionally, the diffusion coefficient is considered to be uniformly continuous in the $x$-variable locally uniformly in time. 

To the best of our knowledge, the best pathwise uniqueness results for SDEs, with no a priori continuity assumption on the diffusion term, can be obtained through \cite{roeckner2010weakuniqueness} and \cite{champagnat2018};
these works require Sobolev-regularity of the coefficients in the $x$-variable, but no a priori continuity property. Here, we will just focus on the result obtained in \cite{roeckner2010weakuniqueness}, as it allows the coefficients to only have local Sobolev-regularity in the spatial variable.

In this subsection, we will provide the reader with a simple modification of a restricted pathwise uniqueness result for SDEs in the proof of \cite[Theorem 1.1]{roeckner2010weakuniqueness}.
The strength of this result is that there is a trade-off between the regularity of the densities of the time marginal laws of a solution process and the regularity of the coefficients of the equation.
Further, the estimate \cite[Lemma A.3]{crippa2008estimates} and \cite[Lemma A.2]{crippa2008estimates}, involving the (local) Hardy-Littlewood maximal function, shows that sufficient Sobolev-regular coefficients of \eqref{SDE} (see below) satisfy \eqref{SDE.theorem.pathwiseuniqueness.oneSidedLipschitz}, see Remark \ref{SDE.remark.pathwiseuniqueness}.
Here, we also refer to \cite{champagnat2018}, where the authors developed an interesting modification of this estimate (see \cite[Lemma 3.2]{champagnat2018}), which turned out to be very useful when showing a restricted pathwise uniqueness result in the critical case when the drift coefficient of an SDE has $L^1([0,T];W^{1,1}(\rd))$-regularity (cf. \cite[Theorem 1.1]{champagnat2018}).

Let us consider the following stochastic differential equation
\begin{align}\label{SDE}\tag{SDE}
	dX(t)&= \boldsymbol{F}(t,X(t))dt + \boldsymbol{\sigma}(t,X(t))dW(t),\ \  t\in [0,T]\\
	X(0)&=\xi.\notag
\end{align} 
where
$\boldsymbol{F} : [0,T] \times \RR^d \to \RR^d\text{, }
	\boldsymbol{\sigma} : [0,T]  \times \RR^d \to \RR^{d\times d}$
are $\BBBB([0,T])\otimes\BBBB(\rd)/\BBBB(\rd)$- and $\BBBB([0,T])\otimes\BBBB(\rd)/\BBBB(\RR^{d\times d})$-measurable functions, respectively; the initial condition $\xi$ and the $d$-dimensional Brownian motion $W$ are considered to be analogous to those introduced in the beginning of this work.

We have the following
\begin{theorem}[restricted pathwise uniqueness for \eqref{SDE}]\label{SDE.theorem.pathwiseuniqueness}
Let $\boldsymbol{F},\boldsymbol{\sigma} \in L^\infty([0,T]\times\rd)$. Fix $p, q, p', q'\in [1,\infty]$, such that
	$\nicefrac{1}{p}+\nicefrac{1}{p'}=\nicefrac{1}{q}+\nicefrac{1}{q'}=1.$
	Let $(X,W),(Y,W)$ be two (usual) weak solutions to \eqref{SDE} up to time $T$ on a common stochastic basis $(\Omega,\FF,\PP; (\FF_t)_{t\in [0,T]})$ with $X(0)=Y(0)$ $\PP$-a.s., such that
	\begin{align}\label{SDE.theorem.pathwiseuniqueness.lawintegrability}
		\frac{d\law{X(\cdot)}}{dx},\frac{d\law{Y(\cdot)}}{dx} \in L^{q'}([0,T];L^{p'}_{loc}(\rd)).
	\end{align}
	If for any radius $R>0$, there exists a function $f_R \in L^q([0,T];L^p(B_R(0))$, such that for almost every $(t,x,y) \in [0,T]\times B_R(0)\times B_R(0)$
	\begin{align}\label{SDE.theorem.pathwiseuniqueness.oneSidedLipschitz}
		2\scalarproduct[\rd]{x-y}{\boldsymbol{F}(t,x)-\boldsymbol{F}(t,y)}+|\boldsymbol{\sigma}(t,x)-\boldsymbol{\sigma}(t,y)|^2 \leq (f_R(t,x)+f_R(t,y))\cdot |x-y|^2.
	\end{align}
	Then, $\sup_{t\in [0,T]}|X(t)-Y(t)|=0$.
\end{theorem}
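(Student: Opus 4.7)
The plan is to adapt the Krylov--R\"ockner logarithmic argument: apply It\^o's formula to a regularised logarithm of $Z(t):=X(t)-Y(t)$ to leverage the one-sided bound \eqref{SDE.theorem.pathwiseuniqueness.oneSidedLipschitz}, and close the estimate via H\"older's inequality against the time-marginal densities, whose integrability is furnished by \eqref{SDE.theorem.pathwiseuniqueness.lawintegrability}. Concretely, I would localise with the stopping times $\tau_R := \inf\{t\in [0,T]:|X(t)|\vee|Y(t)|\geq R\}\wedge T$ and apply It\^o's formula to $\phi_\delta(z):=\log(\delta+|z|^2)$, $\delta>0$. A direct computation shows that the finite-variation part of $\phi_\delta(Z(t))$ equals
$$\int_0^t \frac{2\scalarproduct[\rd]{Z}{\boldsymbol F(\cdot,X)-\boldsymbol F(\cdot,Y)}+|\boldsymbol\sigma(\cdot,X)-\boldsymbol\sigma(\cdot,Y)|^2}{\delta+|Z|^2}\,ds - \int_0^t \frac{2|(\boldsymbol\sigma(\cdot,X)-\boldsymbol\sigma(\cdot,Y))^\top Z|^2}{(\delta+|Z|^2)^2}\,ds,$$
of which the second summand is non-positive and may be discarded, while \eqref{SDE.theorem.pathwiseuniqueness.oneSidedLipschitz} combined with $|Z|^2/(\delta+|Z|^2)\leq 1$ bounds the first integrand pointwise by $f_R(\cdot,X)+f_R(\cdot,Y)$ on $[0,\tau_R]$. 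Since $\boldsymbol\sigma$ and $2Z/(\delta+|Z|^2)$ (the latter by $1/\sqrt{\delta}$) are bounded, the stochastic integral stopped at $\tau_R$ is a true martingale, so using $X(0)=Y(0)$ yields
$$\mathbb{E}\bigl[\log(1+|Z(t\wedge\tau_R)|^2/\delta)\bigr] \leq \mathbb{E}\Bigl[\int_0^{t\wedge\tau_R}\!\bigl(f_R(s,X(s))+f_R(s,Y(s))\bigr)\,ds\Bigr], \quad t\in [0,T].$$

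Next I would estimate the right-hand side by rewriting each expectation as an integral against the corresponding time-marginal density over $B_R(0)$ (using that $|X(s)|,|Y(s)|\leq R$ for $s\leq \tau_R$) and applying H\"older twice (in space with $(p,p')$, in time with $(q,q')$), producing a bound
$$C_R := \|f_R\|_{L^q([0,T];L^p(B_R(0)))}\bigl(\|d\law{X(\cdot)}/dx\|_{L^{q'}([0,T];L^{p'}(B_R(0)))}+\|d\law{Y(\cdot)}/dx\|_{L^{q'}([0,T];L^{p'}(B_R(0)))}\bigr),$$
finite by \eqref{SDE.theorem.pathwiseuniqueness.lawintegrability} and, crucially, independent of $\delta$. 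Letting $\delta\downarrow 0$, monotone convergence combined with this $\delta$-independent bound forces $\PP(|Z(t\wedge\tau_R)|>0)=0$, since otherwise the left-hand side would diverge on $\{|Z(t\wedge\tau_R)|>0\}$. Path continuity gives $\tau_R\uparrow T$ $\PP$-a.s.\ as $R\to\infty$; testing at a countable dense set of times in $[0,T]$ and invoking path continuity once more then yields $\sup_{t\in[0,T]}|X(t)-Y(t)|=0$.

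I expect the main technical point to be organising the two-level H\"older pairing so that the integrability exponents of $f_R$ match those of the densities, and confirming that stopping at $\tau_R$ genuinely converts the local martingale into a martingale (without requiring an additional localising sequence); beyond that, the argument is a routine extension of the logarithmic-estimate strategy familiar from \cite{roeckner2010weakuniqueness}.
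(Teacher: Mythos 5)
Your proposal is correct and is essentially the paper's own argument: the paper simply defers to the logarithmic estimate in the proof of \cite[Theorem 1.1]{roeckner2010weakuniqueness}, noting only that one must apply H\"older's inequality separately in space (with $(p,p')$) and in time (with $(q,q')$), which is exactly the two-level pairing you carry out explicitly. The It\^o computation for $\log(\delta+|z|^2)$, the localisation by $\tau_R$, the $\delta$-independent bound, and the $\delta\downarrow 0$ conclusion all match the intended proof.
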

\begin{proof}
	The proof is essentially contained in the proof of \cite[Theorem 1.1]{roeckner2010weakuniqueness}. However, since we allow different integrability in space and time for the time marginal law densities in \eqref{SDE.theorem.pathwiseuniqueness.lawintegrability} and for $f_R$, we need to separately apply Hölder-estimates in the proof of \cite[Theorem 1.1]{roeckner2010weakuniqueness}. We omit the details here, since the adaption of the proof is straightforward.
\end{proof}
The following remark will be useful when checking \eqref{SDE.theorem.pathwiseuniqueness.oneSidedLipschitz} in applications (see, e.g. Theorem \ref{MVSDE.PME.theorem.pathwiseuniqueness}).
\begin{remark}\label{SDE.remark.pathwiseuniqueness}\begin{enumerate}[\label=(a)]
	\item 
In the case $q=p=1$, the proof of \cite[Theorem 1.1]{roeckner2010weakuniqueness} allows to replace \eqref{SDE.theorem.pathwiseuniqueness.lawintegrability} by
\begin{align*}
	\frac{d\law{X(\cdot)}}{dx},\frac{d\law{Y(\cdot)}}{dx} \in L^{\infty}_{loc}([0,T]\times\rd)).
\end{align*} 
Similarly, in the case $q=p=\infty$, the regularity assumption on $f_R$ appearing in \eqref{SDE.theorem.pathwiseuniqueness.oneSidedLipschitz} can be replaced by
\begin{align*}
	f_R \in L^{\infty}([0,T]\times B_R(0)).
\end{align*}
\item In the case $q=p=1$, \eqref{SDE.theorem.pathwiseuniqueness.oneSidedLipschitz} is satisfied if, for some $\epsilon>0$,
	\begin{align*}
		\boldsymbol{F} \in L^{1}([0,T];W_{loc}^{1,1+\epsilon}(\rd;\rd)),\ \boldsymbol{\sigma} \in L^{2}([0,T];W_{loc}^{1,  2}(\rd;\RR^{d\times d})).
	\end{align*}
	For details see [Remark 1.2,RO10].
	\end{enumerate}
\end{remark}

\subsubsection{Application to \eqref{MVSDE}}\label{section.PU.app}
In this subsection, we will apply the pathwise uniqueness result for SDEs from the previous section to \eqref{MVSDE}.
We impose the following additional assumption on $a$ and $E$, respectively.
\begin{align}\label{condition.a}\tag{\text{v}} 
	a \text{ is locally Lipschitz continuous, } \nabla E \in L^2_{loc}(\rd;\RR^{d\times d}).
\end{align}
Note that if $\beta\in C^2(\rd)$, then $a \in C^1(\rd)$ with $a'(0)=\frac{1}{2}\beta''(0)$ and the first part of \eqref{condition.a} is automatically satisfied.

We have the following

\begin{theorem}[$P_{(u_t)}$-pathwise uniqueness]\label{MVSDE.PME.theorem.pathwiseuniqueness}
	Let $d\neq 2$
	and $u_0 \in\PPPP_0(\rd)\cap L^\infty(\rd)$. Assume that the conditions \eqref{condition.beta.general}-\eqref{condition.b} and \eqref{condition.a} are fulfilled. 
	Let $(X,W),(Y,W)$ be $P_{(u_t)}$-weak solutions to \eqref{MVSDE}
	 on the same filtered probability space $(\Omega, \FF, \PP; (\FF_t)_{t\in[0,T]})$, with the same Brownian motion $W$, and $X(0) = Y(0)$ $\PP$-a.s.,
	where
	$u$ is the probability solution provided by Theorem \ref{PME.theorem.existence}.

	Then, $\sup_{t \in [0,T]} |X(t) - Y(t)| = 0$ $\PP$-a.s.
\end{theorem}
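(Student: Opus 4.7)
The strategy is to reduce $P_{(u_t)}$-pathwise uniqueness for \eqref{MVSDE} to the fixed-coefficient SDE pathwise uniqueness result of Theorem \ref{SDE.theorem.pathwiseuniqueness}. Since any $P_{(u_t)}$-weak solution $(X,W)$ satisfies $\law{X(t)} = u_t\,dx$ for every $t\in [0,T]$, plugging this identity into the density-dependent coefficients of \eqref{MVSDE} reduces them to the deterministic space-time fields
\begin{align*}
\boldsymbol{F}(t,x) := E(x)\,b(u_t(x)), \qquad \boldsymbol{\sigma}(t,x) := \sqrt{2a(u_t(x))}\,\mathbbm{1}_{d\times d}.
\end{align*}
In particular, $(X,W)$ and $(Y,W)$ are both ordinary weak solutions to \eqref{SDE} with these same (non-MV) coefficients $\boldsymbol{F},\boldsymbol{\sigma}$, the same initial datum, and the same driving Brownian motion, so the result reduces to verifying the hypotheses of Theorem \ref{SDE.theorem.pathwiseuniqueness}.

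I would apply Theorem \ref{SDE.theorem.pathwiseuniqueness} in the exponent regime $p=q=1$ (hence $p'=q'=\infty$), invoking Remark \ref{SDE.remark.pathwiseuniqueness}(a) so that the density condition \eqref{SDE.theorem.pathwiseuniqueness.lawintegrability} reduces to $u \in L^{\infty}_{loc}([0,T]\times\rd)$, which holds by Theorem \ref{PME.theorem.existence}. Boundedness of $\boldsymbol{F}$ and $\boldsymbol{\sigma}$ in $L^\infty([0,T]\times\rd)$ is immediate from $E \in L^\infty$ and \eqref{condition.b}, together with $u \in L^\infty$, continuity of $a$, and the lower bound $a \geq \gamma_0$ from \eqref{condition.a.nondegenerate}. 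By Remark \ref{SDE.remark.pathwiseuniqueness}(b), the one-sided Lipschitz condition \eqref{SDE.theorem.pathwiseuniqueness.oneSidedLipschitz} then follows once one verifies
\begin{align*}
\boldsymbol{F} \in L^{1}\bigl([0,T];W^{1,1+\epsilon}_{loc}(\rd;\rd)\bigr), \qquad \boldsymbol{\sigma} \in L^{2}\bigl([0,T];W^{1,2}_{loc}(\rd;\RR^{d\times d})\bigr)
\end{align*}
for some $\epsilon > 0$.

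For $\boldsymbol{F}$, the chain rule (applicable because $u$ is essentially bounded and $b \in C^1$) yields $\nabla\boldsymbol{F} = b(u_t)\,\nabla E + b'(u_t)\, E \otimes \nabla u_t$; the first summand lies in $L^\infty([0,T];L^2_{loc})$ because $b(u)$ is bounded and $\nabla E \in L^2_{loc}$ by \eqref{condition.a}, while the second lies in $L^2([0,T];L^2_{loc})$ because $E$ and $b'(u)$ are bounded and $\nabla u \in L^2([0,T]\times\rd)$ by Theorem \ref{PME.theorem.existence}; the choice $\epsilon = 1$ then suffices. For $\boldsymbol{\sigma}$, the local Lipschitz continuity of $a$ from \eqref{condition.a} combined with $u \in L^\infty \cap L^2([0,T];W^{1,2})$ and the strict lower bound $a(u) \geq \gamma_0 > 0$ justifies the chain-rule identity $\nabla\boldsymbol{\sigma} = \bigl(a'(u_t)\,\nabla u_t\bigr)\big/\sqrt{2a(u_t)}\,\mathbbm{1}_{d\times d}$, which lies in $L^2([0,T];L^2_{loc})$ since $a'(u)$ is essentially bounded on the range of $u$. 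The main technical obstacle is the careful justification of these chain-rule identities for compositions of merely locally Lipschitz (as opposed to $C^1$) functions with the Sobolev-regular $u$ supplied by Theorem \ref{PME.theorem.existence}; once this is in hand, Theorem \ref{SDE.theorem.pathwiseuniqueness} delivers $\sup_{t \in [0,T]}|X(t)-Y(t)| = 0$ $\PP$-a.s.
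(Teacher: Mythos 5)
Your proposal is correct and follows essentially the same route as the paper: freeze the marginal densities $u$ in the coefficients to reduce to \eqref{MVSDE.fixed.u}, then verify the hypotheses of Theorem \ref{SDE.theorem.pathwiseuniqueness} in the regime $q=p=1$ via Remark \ref{SDE.remark.pathwiseuniqueness} using $u\in L^\infty([0,T]\times\rd)\cap L^2([0,T];W^{1,2}(\rd))$, $\nabla E\in L^2_{loc}$, and the local Lipschitz continuity of $a$. The chain-rule point you flag as the main obstacle is handled in the paper exactly as you anticipate, by noting that $\sqrt{a}$ (resp.\ $b$) is locally Lipschitz on the range of $u$ and invoking the standard estimate $|\nabla (f\circ u_t)|\le L|\nabla u_t|$ for Lipschitz $f$ composed with Sobolev functions (cf.\ \cite[Theorem 2.1.11]{ziemer89}), which avoids writing $a'$ explicitly.
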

\begin{proof}[Proof of Theorem \ref{MVSDE.PME.theorem.pathwiseuniqueness}]
Let $u$ denote the probability solution provided by Theorem \ref{PME.theorem.existence}.
As explained before, we exactly need to show that $P_{(u_t)}$-pathwise uniqueness holds for the SDE
\begin{align*}
	dX(t) &= E(X(t))b(u_t(X(t))dt + \sqrt{2a(u_t(X(t)))}dW(t),\ \ t\in [0,T],\\
	X(0)&=\xi.
\end{align*}
We will now check the conditions in Theorem \ref{SDE.theorem.pathwiseuniqueness}.
By Remark \ref{SDE.remark.pathwiseuniqueness}, these will be implied by the following conditions together with \eqref{pu_u_reg_detail}.
	\begin{enumerate}[(a)]
		\item \label{pu_coeff_Linfty} $Eb(u) \in L^\infty([0,T]\times\rd;\rd), a(u) \in L^\infty([0,T]\times\rd)$,
		\item \label{pu_diffu_sobolev}$\nabla(\sqrt{a(u)}) \in L^2([0,T];L^2(\rd))$,
		\item \label{pu_drift_sobolev} $\nabla(Eb(u)) \in L^2([0,T];L^2_{loc}(\rd;\RR^{d\times d}))$.
	\end{enumerate}
	
	Clearly, \eqref{pu_coeff_Linfty} is satisfied, since $b$ and $a$ are continuous
	and $E$ and $u$ are bounded.
	Let us now show condition \eqref{pu_diffu_sobolev}. 
	Due to \eqref{condition.a.nondegenerate} and \eqref{condition.a}, $\sqrt{a}$ is locally Lipschitz continuous. Let $L>0$ be the Lipschitz constant of $\sqrt{a}$ on the interval $[-\norm[L^\infty]{u},\norm[L^\infty]{u}]$. By \eqref{pu_u_reg_detail}, $\sqrt{a(u_t)})$ has a weak gradient satisfying
	\begin{align*}
		|\nabla\sqrt{a(u_t)}|\leq L|\nabla u_t| \text{ a.e.},
	\end{align*}
	for almost every $t\in [0,T]$ (cf. \cite[Theorem 2.1.11.]{ziemer89}).
	Using \eqref{pu_u_reg_detail}, we see that condition \eqref{pu_diffu_sobolev} is fulfilled.
	
	Let us now turn to condition \eqref{pu_drift_sobolev}.
	Analogous to the proof of \eqref{pu_diffu_sobolev}, condition \eqref{pu_u_reg_detail} and $b\in C^1(\RR)$ imply that $\nabla (b(u))\in L^2([0,T];L^2(\rd))$.
	Having in mind \eqref{condition.D}, \eqref{condition.b} and \eqref{condition.a}, we estimate using the product rule  for Sobolev functions (see, e.g. \cite[Theorem 4.4]{evans_MIT})
	\begin{align*}
		&\norm[{L^2([0,T];L^2(B_R(0);\RR^{d\times d}))}]{\nabla(Eb(u))}  = \norm[{L^2([0,T];L^2(B_R(0);\RR^{d\times d}))}]{(\nabla E)b(u) + (E\otimes \nabla (b(u)))} \\
		&\ \ \ \ \ \leq T^{\nicefrac{1}{2}} \norm[{L^\infty([0,T]\times\rd)}]{b(u)}\norm[{L^2(B_R(0);\RR^{d\times d})}]{\nabla E}\\   
		&\ \ \ \ \ \ \ \ +d \norm[{L^\infty([0,T]\times\rd;\rd)}]{E}\norm[{L^2([0,T];L^2(\rd;\rd))}]{\nabla (b(u))} < \infty,
	\end{align*}
		where $R>0$ is arbitrary and $\otimes$ denotes the usual dyadic product in this (and only this) calculation.
	This finishes the proof.
\end{proof}
The proof of the main result, Theorem \ref{MVSDE.PME.theorem.strongExistence}, is straight forward to conclude.
\begin{proof}[Proof of Theorem \ref{MVSDE.PME.theorem.strongExistence}]
	On the one hand, by Theorem \ref{MVSDE.PME.theorem.weakExistence}, there exists a $P_{(u_t)}$-weak solution to \eqref{MVSDE}. On the other hand, Theorem \ref{MVSDE.PME.theorem.pathwiseuniqueness} shows that $P_{(u_t)}$-pathwise uniqueness holds for \eqref{MVSDE}. Now, the assertion follows from Theorem \ref{MVSDE.restrYamada}. 
\end{proof}
\begin{remark}
	In fact, we can relax the integrability condition on $E$ in \eqref{condition.a} to $\nabla E \in L^1_{loc}(\rd;\RR^{d\times d})$. Indeed, one can use the stopping time technique of the proof of \cite[Theorem 1.1]{roeckner2010weakuniqueness} and the general technique of the proof of \cite[Theorem 1.1 (ii)]{champagnat2018} to prove, in particular, a pathwise uniqueness result for \eqref{SDE} with coefficients $\boldsymbol{F} \in L^\infty([0,T]\times\rd;\rd)\cap L^1([0,T];W^{1,1}_{loc}(\rd;\rd))$ and $\boldsymbol{\sigma} \in L^\infty([0,T]\times\rd;\RR^{d\times d})\cap L^2([0,T];W^{1,2}_{loc}(\rd;\RR^{d\times d}))$ among weak solutions with bounded time marginal law densities. The details are carried out in the author's thesis \cite{grube2022thesis}.
\end{remark}
\begin{remark}
	With a similar technique as in this work, we are able to prove the existence of a strong solution to the degenerate McKean--Vlasov SDE associated to the classical porous medium equation in one dimension
	\begin{align*}
		\partial_t u = \partial^2_x (|u|^{m-1}u),\ \  t\in [0,T],
		\left.u\right|_{t=0} = u_0 \in L^\infty(\RR)\cap \PPPP_0(\RR),
	\end{align*}
	where $m> 3$. From \cite[Theorem 1.2]{gess17}, it is known that there exists a unique entropy solution satisfying $u^\frac{m-1}{2} \in L^2([0,T];W^{\frac{1}{2},1}(\RR))$. If $u_0$ is chosen as above, $u$ is a bounded probability solution to this equation. As above, the existence of a $P_{(u_t)}$-weak solution is argumented by the superposition principle procedure for MVSDEs. Employing the one dimensional pathwise uniqueness result \cite[Theorem 1.2]{champagnat2018}, we can show $P_{(u_t)}$-pathwise uniqueness to the corresponding MVSDE. Using a similar procedure to the one in Section \ref{section.MVSDE.PME.Procedure}, we obtain a unique $P_{(u_t)}$-strong solution.
	To the best of the author's knowledge, this result seems to be new.
	Note that in \cite{Benachour96} the authors proved the existence of a unique strong solution in the case $d=1$ under the stronger assumption that the initial distribution density is given by $u_0 \in (\PPPP_0\cap C^1_b)(\RR)$ with the property that $u_0$ is always strictly positive.
\end{remark}
\textbf{Acknowledgements:}
I would like to express my utmost gratitude towards my supervisor Prof. Dr. Michael Röckner, who pointed out this interesting topic to me and with whom I shared various fruitful discussions.
Moreover, I gratefully acknowledge the support by the German Research Foundation (DFG) through the IRTG 2235.
\bibliographystyle{alpha}
\bibliography{Bibliography}
\end{document}